\newtheorem{lemma}{Lemma}[section]
\newtheorem{corollary}[lemma]{Corollary}
\newtheorem{theorem}[lemma]{Theorem}
\newtheorem{proposition}[lemma]{Proposition}
\theoremstyle{definition}
\newtheorem{remark}[lemma]{Remark}
\newtheorem{definition}[lemma]{Definition}
\newtheorem{example}[lemma]{Example}
\begin{document}

\title{Algebraic Cuntz-Krieger algebras}\footnotetext{This research was in part supported by a grant from IPM (No. 95170419).}

\author{Alireza Nasr-Isfahani}
\address{Department of Mathematics, University of Isfahan, P.O. Box: 81746-73441, Isfahan, Iran\\
  and School of Mathematics, Institute for Research in Fundamental Sciences (IPM), P.O. Box: 19395-5746, Tehran, Iran}
\email{nasr$_{-}$a@sci.ui.ac.ir / nasr@ipm.ir}

\subjclass[2010]{{16W99}, {16E20}, {16D90}.}

\keywords{Leavitt path algebra, Cuntz-Krieger algebra, Stably isomorphic, Morita equivalence.}

\begin{abstract} We show that a directed graph $E$ is a finite graph with no sinks if and only if for each commutative unital ring $R$, the Leavitt path algebra $L_R(E)$ is isomorphic to an algebraic Cuntz-Krieger algebra if and only if the $C^*$-algebra $C^*(E)$ is unital and $rank(K_0(C^*(E)))=rank(K_1(C^*(E)))$. Let $k$ be a field and $k^{\times}$ be a group of units of $k$. When $rank(k^{\times})< \infty$, we show that the Leavitt path algebra $L_k(E)$ is isomorphic to an algebraic Cuntz-Krieger algebra if and only if $L_k(E)$ is unital and $rank(K_1(L_k(E)))=(rank(k^{\times})+1)rank(K_0(L_k(E)))$. We also show that any unital $k$-algebra which is Morita equivalent or stably isomorphic to an algebraic Cuntz-Krieger algebra, is isomorphic to an algebraic Cuntz-Krieger algebra. As a consequence, corners of algebraic Cuntz-Krieger algebras are algebraic Cuntz-Krieger algebras.
\end{abstract}

\maketitle

%%%%%%%%%%%%%%%%%%%%%%%%%%%%%%%%%%%%%%

\section{Introduction}

Cuntz-Krieger algebras, introduced and first investigated by Cuntz and Krieger \cite{CK} in 1980, is a prominent class of $C^*$-algebras arising from dynamical systems. The Cuntz-Krieger algebra $\mathcal{O}_A$ was originally associated to a finite square $\{0, 1\}$-matrix $A$ \cite{CK}, but it can also be viewed as the graph $C^*$-algebra of a finite directed graph with no sinks and no sources \cite{W}. Graph $C^*$-algebras and their generalizations have been intensively investigated by analysts for more than two decades (see \cite{R} for an overview of the subject).

The algebraic Cuntz-Krieger algebras arose as specific examples of fractional skew monoid rings \cite{AGGP}.
Leavitt path algebras are the algebraic version of graph $C^*$-algebras. Leavitt path algebras $L_k(E)$ are quotients of path algebras associated to an extended graph $\widehat{E}$ and a field $k$, modulo additional relations. An algebraic Cuntz-Krieger algebra $\mathcal{CK}_k(E)$ is a Leavitt path algebra $L_k(E)$ of a finite graph $E$ with no sinks and no sources.

Initially, Leavitt path algebras were introduced by P. Ara, M. A. Moreno
and E. Pardo in \cite{AMP} and by G. Abrams and G. Aranda Pino in \cite{AA1}. M. Tomforde in \cite{T} generalizes the construction of Leavitt path algebras by replacing the field $k$ with a commutative unital ring $R$. Leavitt path algebras are also a generalization of the algebras
constructed by Leavitt in \cite{Leavitt} to produce rings without the Invariant Basis Number property (i.e., $R_R^m\cong
R_R^n$ as left $R$-modules with $m\neq n$). Leavitt path algebras include many well-known algebras such as matrix algebras ${\mathbb M}_n(R)$ for $n\geq 1$, the Laurent polynomial ring $R[x,x^{-1}]$, or the Leavitt algebras $L(1,n)$ for $n\ge 2$.

Many times in the literature, the conditions characterizing some analytic properties of the graph $C^*$-algebra turned out to be exactly the same conditions characterizing the corresponding algebraic version of these properties. In this sense the Leavitt path algebra theory, being more recent, has benefited from the inspiration that the graph $C^*$-algebra world provided. This is the case once more for the topic discussed in the current paper: the analytic results were given in \cite{AR} for the Cuntz-Krieger algebras, and we give here the algebraic analogue for algebraic Cuntz-Krieger algebras. Some of the ideas in this paper are contained in \cite{AR}. Theorem 3.12 of \cite{AR} says that, the graph $C^*$-algebra $C^*(E)$ is isomorphic to a Cuntz-Krieger algebra if and only if $C^*(E)$ is unital and $rank(K_0(C^*(E)))=rank(K_1(C^*(E)))$. In algebraic setting we prove a slightly different theorem \ref{thm2}: "Let $k$ be a field such that $rank(k^{\times})< \infty$. Then the Leavitt path algebra $L_k(E)$ is isomorphic to an algebraic Cuntz-Krieger algebra if and only if $L_k(E)$ is unital and $rank(K_1(L_k(E)))=(rank(k^{\times})+1)rank(K_0(L_k(E)))$". We also show that the assumption $rank(k^{\times})< \infty$ is necessary.

The paper is organized as follows. In Section 2 we give all the background information,
definitions and basic properties of Leavitt path algebras that we need in this paper.

In Section 3 we give a characterization of algebraic Cuntz-Krieger algebras. In the first step of this process, we provide a class of operations on graphs that preserve isomorphism of associated Leavitt path algebras. With this useful result, we show that algebraic Cuntz-Krieger algebras are Leavitt path algebras of finite graphs with no sinks. Finally, in Corollary \ref{cor2} and Theorem \ref{thm2} we derive further conditions for $L_R(E)$ to be isomorphic to an algebraic Cuntz-Krieger $R$-algebras when $R$ is a commutative unital ring or a field.

In Section 4 we first show that the corner $P_XL_R(E)P_X$, where $X$ is a finite subset of $E^0$, is isomorphic to a Leavitt path algebra $L_R(E(T))$. Also we show that if $L_R(E)$ is an algebraic Cuntz-Krieger algebra, then $L_R(E(T))$ is an algebraic Cuntz-Krieger algebra. Finally after proving similar results for $\mathbb{M}_n(L_R(E))$ and $\mathbb{M}_\infty(L_R(E))$, we show that if $A$ is an algebraic Cuntz-Krieger $R$-algebra then for each positive integer $n$, $\mathbb{M}_n(A)$ is isomorphic to an algebraic Cuntz-Krieger algebra.

 In the last section, we show that if a unital $k$-algebra $A$ is Morita equivalent or stably isomorphic to an algebraic Cuntz-Krieger algebra, then it is isomorphic to an algebraic Cuntz-Krieger algebra. As a consequence, $A$ is an algebraic Cuntz-Krieger algebra if and only if the full $n\times n$ matrix algebra over $A$ is an algebraic Cuntz-Krieger algebra. Also we show that if $A$ is an algebraic Cuntz-Krieger algebra, then the corners $eAe$ and $e'\mathbb{M}_\infty(A)e'$ are isomorphic to algebraic Cuntz-Krieger algebras.
%%%%%%%%%%%%%%%%%%%%%%%%%%%%%%%%%%%%%%
\section{Preliminaries}

A directed {\it graph} $E=(E^{0},E^{1},r_E,s_E)$ consists of two sets $E^{0}$ and $
E^{1}$ together with maps $r_E, s_E:E^{1}\rightarrow E^{0}$, identifying the range and source of each edge.  The elements of $
E^{0}$ are called \textit{vertices} and the elements of $E^{1}$ \textit{edges}.

If a vertex $v$ emits no edges, that is, if $s_E^{-1}(v)$ is empty, then $v$
is called a\textit{\ sink}. A vertex $v$ is called a \textit{regular vertex} if $s_E^{-1}(v)$ is a finite non-empty set. The set of regular vertices is denoted by $E^0_{\text{reg}}$. We let $E^0_{\text{sing}}:=E^0\backslash E^0_{\text{reg}}$ and refer to an element of $E^0_{\text{sing}}$ as a singular vertex.

A finite path $\mu $ in a
graph $E$ is a finite sequence of edges $\mu =e_{1}\dots e_{n}$ such that $
r_E(e_{i})=s_E(e_{i+1})$ for $i=1,\dots ,n-1$. In this case, $n=l(\mu)$ is the length
of $\mu $. We view the elements of $E^{0}$ as paths of length $0$. For any $n\in {\mathbb N}$ the set of paths of length $n$ is denoted by $E^n$. Also, $\text{Path}(E)$ stands for the set of all finite paths, i.e., $\text{Path}(E)=\bigcup_{n=0}^{\infty} E^n$. We denote
by $\mu ^{0}$ the set of the vertices of the path $\mu $, that is, the set $\{s(e_{1}),r(e_{1}),\dots ,r(e_{n})\}$.

A path $\mu $ $=e_{1}\dots e_{n}$ is \textit{closed} if $r(e_{n})=s(e_{1})$,
in which case $\mu $ is said to be {\it based at the vertex} $s(e_{1})$. The closed path $\mu $ is called a \textit{cycle} if it does not pass through any of its vertices twice, that is, if $s(e_{i})\neq s(e_{j})$ for every $i\neq j$. A cycle of length one is called a \textit{loop}.  An \textit{exit }for a path $\mu =e_{1}\dots e_{n}$ is an edge $e$ such that $s_E(e)=s_E(e_{i})$ for some $i$ and $e\neq e_{i}$.

A right-infinite path $\mu $ in a
graph $E$ is an infinite sequence of edges $\mu =e_{1}e_2e_3\dots$ such that $
r_E(e_{i})=s_E(e_{i+1})$ for each $i$. A left-infinite path $\mu $ in a
graph $E$ is an infinite sequence of edges $\mu =\dots e_{-3}e_{-2}e_{-1}$ such that $
r_E(e_{i})=s_E(e_{i+1})$ for each $i$. A bi-infinite path $\mu $ in a
graph $E$ is an infinite sequence of edges $\mu =\dots e_{-3}e_{-2}e_{-1}e_0e_{1}e_2e_3\dots$ such that $
r_E(e_{i})=s_E(e_{i+1})$ for each $i$. We denote by $E^\infty$ the set of all (right-, left-, bi-) infinite paths in $E$.

A path $\mu=e_1e_2e_3\cdots$ is called vertex-simple if the sequence $s(e_1), r(e_1), r(e_2), \cdots$ contains no repeated vertices. A graph $E$ is called path-finite if $E^\infty$ contains no vertex-simple paths. A graph $E$ is called row-finite if for each $v\in E^0$, $s_E^{-1}(v)$ is a finite set.

For each $e\in E^{1}$, we call $e^{\ast }$ a {\it ghost edge}. We let $r_E(e^{\ast}) $ denote $s_E(e)$, and we let $s_E(e^{\ast })$ denote $r_E(e)$.

\begin{definition}\label{GA} {\rm Let $E$ be a graph. The \textit{graph $C^*$-algebra} $C^*(E)$ is the universal $C^*$-algebra generated by mutually orthogonal projections $\{p_v:v\in E^{0}\}$ together with partial isometries with mutually orthogonal ranges $\{s_e:e\in E^{1}\}$ which satisfy the following
conditions:

(1) (The ``CK-1 relations") For all $e\in E^{1}$, $s_e^*s_e=p_{r(e)}$ and $s_es_e^{\ast}\leq p_{s(e)}$;

(2) (The ``CK-2 relations") For every regular vertex $v\in E^{0}$,
\begin{equation*}
p_v=\sum_{\{e\in E^{1}|\ s_E(e)=v\}}s_es_e^{\ast}.
\end{equation*}}
\end{definition}

\begin{definition}\label{Lpa} {\rm Let $E$ be an arbitrary graph and $R$ be a commutative ring with unit. The \textit{Leavitt path algebra $L_{R}(E)$ with coefficients in $R$} is the universal $R$-algebra generated by a set $\{v:v\in E^{0}\}$ of pairwise orthogonal idempotents together with a set of variables $\{e,e^{\ast }:e\in E^{1}\}$ which satisfy the following
conditions:

(1) $s_E(e)e=e=er_E(e)$ for all $e\in E^{1}$;

(2) $r_E(e)e^{\ast }=e^{\ast }=e^{\ast }s_E(e)$\ for all $e\in E^{1}$;

(3) (The ``CK-1 relations") For all $e,f\in E^{1}$, $e^{\ast}e=r_E(e)$ and $
e^{\ast}f=0$ if $e\neq f$;

(4) (The ``CK-2 relations") For every regular vertex $v\in E^{0}$,
\begin{equation*}
v=\sum_{\{e\in E^{1}|\ s_E(e)=v\}}ee^{\ast}.
\end{equation*}}
\end{definition}

Another definition for $L_R(E)$ can be given using the extended graph $\widehat{E}$. This graph has the same set of vertices $E^0$ and the same set of edges $E^1$ together with the so-called ghost edges $e^*$ for each $e\in E^1$, whose directions are opposite to those of the corresponding $e\in E^1$. $L_R(E)$ can be defined as the usual path algebra $R\widehat{E}$ with coefficients in $R$ subject to the Cuntz-Krieger relations (3) and (4) above.

\begin{definition}\label{CuntzKriegerEsystem} {\rm Let $E$ be a graph and $\mathcal{A}$ be an $R$-algebra with involution $\ast$. A \textit{Cuntz-Krieger $E$-family in $\mathcal{A}$} is a collection $\Sigma = (S_{\mu})_{\mu \in E^0 \cup E^1} \subset \mathcal{A}$ which satisfies the following relations:

(1) For all $v, w \in E^0$, $S_vS_v = S_v$ and $S_vS_w = 0$ if $v\neq w$.

(2) $S_v^{\ast} = S_v$ for all $v \in E^0$.

(3) $S_{s(e)}S_{e} = S_{e} = S_{e}S_{r(e)}$ for all $e\in E^{1}$.

(4) For all $e,f\in E^{1}$, $S_e^{\ast}S_e=S_{r(e)}$ and $S_e^{\ast}S_f=0$ if $e\neq f$.

(5) For every regular vertex $v\in E^{0}$,
\begin{equation*}
S_v=\sum_{\{e\in E^{1},\ s(e)=v\}}S_eS_e^{\ast}.
\end{equation*}}
\end{definition}

Let $A$ be an $R$-algebra with a Cuntz-Krieger $E$-family, thus by the Universal Homomorphism Property of $L_R(E)$, there is a unique $R$-algebra homomorphism from $L_R(E)$ to $A$ mapping the
generators of $L_R(E)$ to their appropriate counterparts in $A$. We will refer to this property as the
Universal Homomorphism Property of $L_R(E)$.

\begin{definition}\label{CKa} {\rm Let $E$ be a finite graph with no sinks and no sources and $R$ a commutative ring with unit. The Leavitt path algebra $L_R(E)$ is called algebraic Cuntz-Krieger algebra, which is denoted by $\mathcal{CK}_R(E)$.
}
\end{definition}

If $E$ has a finite number of vertices, then $L_{R}(E)$ is unital with $\sum_{v\in E^0} v=1_{L_{R}(E)}$; otherwise, $L_{R}(E)$ is a ring with a set of local units (i.e., a set of elements $X$ such that for every finite collection $a_1,\dots,a_n\in L_R(E)$, there exists $x\in X$ such that $a_ix=a_i=xa_i$) consisting of sums of distinct vertices of the graph.

If $\mu = e_1 \dots e_n$ is a path in $E$, we write $\mu^*$ for the element $e_n^* \dots e_1^*$ of $L_{R}(E)$. With this notation it can be shown that the Leavitt path algebra $L_{R}(E)$ can be viewed as
$$L_{R}(E) = \text{span}_{R}\{\alpha\beta^*: \alpha,\beta \in \text{Path}(E) \text{ and } r(\alpha)=r(\beta)\}$$
and $r v \neq 0$ for all $v \in E^0$ and all $r \in R\setminus\{0\}$ (see \cite[Proposition 3.4]{T}). Also $L_R(E)$ is a $\ast$-algebra with linear anti-multiplicative involution defined by $(\sum_{i=1}^{n}r_i\alpha_i\beta_i^{\ast})^{\ast}=\sum_{i=1}^{n}r_i\beta_i\alpha_i^{\ast}$.

Let $G$ be a group. A ring $A=\oplus_{g\in G}A_g$ is called a $G$-graded ring, if each $A_g$ is an additive subgroup of $A$ and $A_gA_{g'}\subseteq A_{g+g'}$ for all $g, g'\in G$. A $G$-graded ring $A=\oplus_{g\in G}A_g$ is called a strongly graded ring if $A_gA_{g'}=A_{g+g'}$ for all $g, g'\in G$. Let $\Phi:A\rightarrow B$ be a ring homomorphism between $G$-graded rings. $\Phi$ is a graded ring homomorphism if $\Phi(A_g)\subseteq B_g$, for all $g\in G$.
Leavitt path algebras can be viewed as graded algebras. Let $G$ be a group with the identity element $e$ and $w:E^1\rightarrow G$ be a weight map. Also let $w(\alpha^\ast)=w(\alpha)^{-1}$ and $w(v)=e$, for each $\alpha\in E^1$ and $v\in E^0$. Thus the path algebra $R\widehat{E}$ of the extended graph $\widehat{E}$ is a $G$-graded $R$-algebra and since Cuntz-Krieger relations are homogeneous, $L_R(E)$ is a $G$-graded $R$-algebra. The natural grading given to a Leavitt path algebra is a $\mathbb{Z}$-grading by setting $w(\alpha)=1$,  $w(\alpha^\ast)=-1$ and $w(v)=0$, for each $\alpha\in E^1$ and $v\in E^0$. In this case the Leavitt path algebra can be decomposed as a direct sum of homogeneous components $L_R(E)=\bigoplus_{n\in {\mathbb Z}} L_R(E)_n$ satisfying $L_R(E)_nL_R(E)_m\subseteq L_R(E)_{n+m}$. Actually, $$L_R(E)_n=\text{span}_R\{pq^*: p,q\in \text{Path}(E) , l(p)-l(q)=n \}.$$ Every element $x\in L_R(E)_n$ is a homogeneous element of degree $n$.

An ideal $I$ is graded if it inherits the grading of $L_R(E)$, that is, if $I=\bigoplus_{n\in {\mathbb Z}} (I\cap L_R(E)_n)$.
Tomforde in \cite{T} (see also \cite[Theorem 3.5]{AMM}) proved that: (Graded Uniqueness Theorem) "Let $E$ be a graph and let $L_R(E)$
be the associated Leavitt path algebra with the usual $\mathbb Z$-grading. If $A$ is a $\mathbb Z$-graded ring, and
$\pi :L_R(E)\rightarrow A$ is a graded ring homomorphism with $\pi(rv)\neq 0$ for all $v\in E^0$ and $r\in R\backslash\{0\}$, then $\pi$ is injective."

We define a relation $\geq $ on $E^{0}$ by setting $v\geq w$ if there exists
a path $\mu$ in $E$ from $v$ to $w$, that is, $v=s(\mu)$ and $w=r(\mu)$. A subset $X$ of $E^{0}$ is called \textit{hereditary} if for each $v\in X$, $v\geq w$ implies that $w\in X$. For any subset $X\subseteq E^0$, the smallest hereditary subset of $E^0$ containing $X$ is denoted by $H_E(X)$. A subset $H \subseteq E^0$ is called \textit{saturated} if for any regular vertex $v$, $r(s^{-1}(v))\subseteq H$ implies that $v\in H$. An ideal $I$ of $L_R(E)$ is called basic if $rv\in I$ for $r\in R\setminus\{0\}$ implies that $v\in I$. Tomforde \cite[Theorem 7.9]{T} proved that the map $H\longrightarrow  I_H$ defines a lattice isomorphism between the saturated hereditary subsets of $E^0$ and the graded basic ideals of $L_R(E)$, where $I_H$ is a two-sided ideal in $L_R(E)$ generated by a saturated hereditary subset $H$ of $E^0$.

A right infinite path $\tau = e_1 e_2 \ldots$ in $E$ is called periodic, if there exist integers $j,k \geq 1$, such that $e_{n+k}= e_{n}$ for every $n \geq j$.
In this case, it is clear that the path $\rho = e_j \ldots e_{j+k-1}$ is closed. Take $j$ and $k$ such that $j+k$ is the smallest possible value which satisfies the condition $e_{n+k}= e_{n}$ for every $n \geq j$ and consider the paths $\alpha = e_1 \ldots e_{j-1}$ and $\lambda = e_j \ldots e_{j+k-1}$. The pair $(\alpha, \lambda)$ is called seed of $\tau$. Of course $\alpha$ may have zero length. In any case, $\lambda$ is a closed path, which is called the period of $\tau$. A right infinite path $\tau$ which is periodic and its period is a closed path without exits (which means that it has to be a cycle without exits), is called infinite discrete essentially aperiodic trail.
For any infinite discrete essentially aperiodic trail which is parameterized by the seed $(\alpha,\lambda_\alpha)$ of the trail (that is, $\alpha  \in \text{Path}(E)$ is its essential head and $r(\alpha)$ is visited by the cycle without exits $\lambda_{\alpha}$), the path $\alpha$ is called a distinguished path. In the case $l(\alpha) = 0$, $\alpha$ is called a distinguished vertex. For any distinguished path $\alpha$, $\alpha\lambda_{\alpha}\alpha^*$ is denoted by $\omega_\alpha$.

To finish this section we introduce a generalized uniqueness theorem \cite[Theorem 5.2]{GN}, which we will use later.

\begin{theorem}{\cite[Theorem 5.2]{GN}}\label{UniquenessThm} Let $E$ be a graph, $R$ be a commutative ring with unit and $\mathcal{A}$ be an $R$-algebra. Consider $\Phi: L_R(E) \rightarrow \mathcal{A}$ a ring homomorphism. Then the following conditions are equivalent:
\begin{itemize}
\item[(i)] $\Phi$ is injective;
\item[(ii)] the restriction of $\Phi$ to $M_R(E)$ is injective;
\item[(iii)] both these conditions are satisfied:
\begin{itemize}
\item[(a)] $\Phi(rv) \neq 0$, for all $v \in E^0$ and for all $r \in R \setminus \{0\}$;
\item[(b)] for every distinguished path $\alpha$ the $\ast$ $R$-algebra $< \Phi(\omega_\alpha) >$ generated by $\Phi(\omega_\alpha)$ is $\ast$-isomorphic to $R[x,x^{-1}]$; that is, $< \Phi(\omega_\alpha) > \ \cong R[x,x^{-1}]$.
\end{itemize}
\end{itemize}
\end{theorem}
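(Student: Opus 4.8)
The plan is to prove the cycle of implications $(i)\Rightarrow(ii)\Rightarrow(iii)\Rightarrow(i)$. The implication $(i)\Rightarrow(ii)$ is immediate, since the restriction of an injective map is injective. For $(ii)\Rightarrow(iii)$ I would first note that all the relevant elements lie in the commutative core. Each vertex satisfies $v=v^\ast$ and $vv^\ast=v=v^\ast v$, so $rv\in M_R(E)$; since $rv\neq 0$ in $L_R(E)$ for $r\in R\setminus\{0\}$ by the structure result quoted after Definition \ref{CKa}, injectivity of $\Phi$ on $M_R(E)$ forces $\Phi(rv)\neq 0$, which is $(a)$. For $(b)$ I would check that $\omega_\alpha=\alpha\lambda_\alpha\alpha^\ast$ is normal: because $\lambda_\alpha$ is a cycle without exits based at $r(\alpha)$, the relations CK-1 and CK-2 give $\lambda_\alpha\lambda_\alpha^\ast=\lambda_\alpha^\ast\lambda_\alpha=r(\alpha)$, whence $\omega_\alpha\omega_\alpha^\ast=\omega_\alpha^\ast\omega_\alpha=\alpha\alpha^\ast$. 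Thus $\omega_\alpha\in M_R(E)$ and, moreover, the $\ast$-subalgebra $\langle\omega_\alpha\rangle$ is already isomorphic to $R[x,x^{-1}]$ inside $L_R(E)$ (the elements $\alpha\lambda_\alpha^{\,n}\alpha^\ast$, $n\in\mathbb Z$, being $R$-independent). Since $\Phi|_{M_R(E)}$ is injective, its restriction to $\langle\omega_\alpha\rangle$ is an injective, hence bijective, $\ast$-homomorphism onto $\langle\Phi(\omega_\alpha)\rangle$, giving $(b)$.

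The substance of the theorem is $(iii)\Rightarrow(i)$, and for this I would invoke the Reduction Theorem for Leavitt path algebras: for every $0\neq x\in L_R(E)$ there exist paths $\mu,\nu\in\text{Path}(E)$ such that $\mu^\ast x\nu$ is nonzero and is of exactly one of two forms --- either $(1)$ a scalar multiple $rv$ of a vertex with $r\in R\setminus\{0\}$, or $(2)$ a nonzero Laurent polynomial in a cycle $c$ without exits, that is, a nonzero element of $\langle c,c^\ast\rangle\cong R[x,x^{-1}]$. Granting this, given $0\neq x$ I would write $\Phi(\mu)^\ast\,\Phi(x)\,\Phi(\nu)=\Phi(\mu^\ast x\nu)$ and argue that the right-hand side is nonzero. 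In case $(1)$ this is precisely condition $(a)$. In case $(2)$ I would identify the basepoint $v$ of the exit-free cycle $c$ as a distinguished vertex (the periodic trail $ccc\cdots$ has seed $(v,c)$, so $\alpha=v$ and $\omega_v=c$), so that $\langle c,c^\ast\rangle=\langle\omega_v\rangle$; condition $(b)$ says $\Phi$ maps this copy of $R[x,x^{-1}]$ isomorphically, hence injectively, onto $\langle\Phi(\omega_v)\rangle$, so $\Phi(\mu^\ast x\nu)\neq 0$. In either case $\Phi(x)\neq 0$, and $\Phi$ is injective.

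The main obstacle is the Reduction Theorem over a general commutative ring $R$ and its correct interface with the commutative core. Over a field this reduction is classical, but over $R$ one must track coefficients carefully to guarantee that the reduced element is genuinely nonzero; here condition $(a)$ (equivalently, the basic-ness that keeps $rv\neq 0$) is exactly what prevents a collapse in case $(1)$. A second delicate point is matching the output of case $(2)$ to the $\omega_\alpha$-framework: one must verify that every nonzero Laurent polynomial produced by the reduction really lives in $\langle\omega_\alpha\rangle$ for some distinguished path $\alpha$, and that a surjective $\ast$-endomorphism of $R[x,x^{-1}]$ fixing the canonical generator is forced to be the identity, so that the abstract isomorphism asserted in $(b)$ genuinely yields injectivity on that subalgebra. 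Once these two points are secured, the three implications close up and the equivalence $(i)\Leftrightarrow(ii)\Leftrightarrow(iii)$ follows.
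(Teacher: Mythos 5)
A preliminary remark: the paper does not prove this statement at all --- it is quoted verbatim from \cite[Theorem 5.2]{GN} and used as an imported tool --- so there is no in-paper proof to compare your argument against; what follows assesses your sketch on its own terms.

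Your architecture $(i)\Rightarrow(ii)\Rightarrow(iii)\Rightarrow(i)$ is the natural one, and the first two implications are essentially complete: $rv$ and $\omega_\alpha$ lie in the commutative core, $rv\neq 0$ by \cite[Proposition 3.4]{T}, and $\langle\omega_\alpha\rangle\cong R[x,x^{-1}]$ inside $L_R(E)$ because $\lambda_\alpha\lambda_\alpha^*=\lambda_\alpha^*\lambda_\alpha=r(\alpha)$ and the elements $\alpha\lambda_\alpha^n\alpha^*$ sit inside Tomforde's $R$-basis. The substance is $(iii)\Rightarrow(i)$, and there your sketch has two genuine gaps, both of which you flag but neither of which you close. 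First, the Reduction Theorem is only invoked; over a field it is classical, but over an arbitrary commutative unital ring the precise form you need --- that the reduction lands either on $rv$ with $r\in R\setminus\{0\}$ or on a Laurent polynomial in an exit-free cycle with a surviving nonzero coefficient --- is exactly the hard technical input and must be proved or correctly cited. Second, and more seriously, in case (2) condition (b) provides only an \emph{abstract} $*$-isomorphism $\langle\Phi(\omega_v)\rangle\cong R[x,x^{-1}]$; it does not by itself say that $\Phi$ is injective on $\langle\omega_v\rangle$. The composite $R[x,x^{-1}]\cong\langle\omega_v\rangle\rightarrow\langle\Phi(\omega_v)\rangle\cong R[x,x^{-1}]$ is a surjective $*$-endomorphism, but nothing forces the second isomorphism to carry $\Phi(\omega_v)$ to $x$, so your reduction to ``a surjective endomorphism fixing the canonical generator'' is not available. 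What is actually needed is either that every surjective $*$-algebra endomorphism of $R[x,x^{-1}]$ is injective, or a classification of the single $*$-generators $u$ of $R[x,x^{-1}]$ with $uu^*=u^*u=1$; over a field this is easy ($u=rx^{\pm 1}$ with $r^2=1$), but over a general commutative ring the unit group of $R[x,x^{-1}]$ picks up contributions from nilpotent and idempotent elements of $R$, and this step carries real content. These two points are precisely where the cited proof in \cite{GN} does its work, so as it stands your argument is a plausible outline rather than a proof.
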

%%%%%%%%%%%%%%%%%%%%%%%%%%%%%%%%%%%%%%
\section{Characterization of algebraic Cuntz-Krieger algebras}

In this section we give a characterization of algebraic Cuntz-Krieger algebras.

\begin{definition}{\cite[Definition 3.6]{AR}}\label{def1} {\rm
Let $E$ be a graph, $H$ be a hereditary subset of $E^0$ and $F(H)=\{\alpha|\alpha=e_1e_2\cdots e_n\in \text{Path}(E), s_E(e_n)\not\in H, r_E(e_n)\in H \}$. Let $\overline{F}(H)$ be another copy of $F(H)$ and for each $\alpha\in F(H)$, the copy of $\alpha$ in $\overline{F}(H)$ is denoted by $\overline{\alpha}$. Define a graph $E(H)$ as follows:
$$E(H)^0=H\cup F(H),$$
$$E(H)^1=s^{-1}_E(H)\cup \overline{F}(H).$$
$s_{E(H)}(e)=s_E(e)$ and $r_{E(H)}(e)=r_E(e)$ for each
$e\in s^{-1}_E(H)$. $s_{E(H)}(\overline{\alpha})=\alpha$ and $r_{E(H)}(\overline{\alpha})=r_E(\alpha)$ for each $\overline{\alpha}\in \overline{F}(H)$.
}
\end{definition}

\begin{example}\label{exa} Consider the graph $E$ given by
$$\xymatrix{
&&&{\bullet}^3\ar[dr]&\\
{\bullet}_5 \ar[r]^{g_1}&{\bullet}_{4}\ar@<.5ex>[r]^{f_1}\ar@<-.5ex>[r]_{f_2}&{\bullet}_{1}\ar[ur]&&{\bullet}^{2}\ar[ll]\\
}$$

Let $H=\{1, 2, 3\}$. Thus $F(H)=\{f_1, f_2, g_1f_1, g_1f_2\}$ and $E(H)$ is the graph\\

$$\xymatrix{
{\bullet}_{g_1f_1}\ar[ddr]^{\overline{g_1f_1}}&&&\\
{\bullet}_{f_1}\ar[dr]_{\overline{f_{1}}}&&{\bullet}^{3}\ar[dr]&\\
&{\bullet}^{1}\ar[ur]&&{\bullet}^{2}\ar[ll]\\
{\bullet}_{f_2}\ar[ur]^{\overline{f_{2}}}&&&\\
{\bullet}_{g_1f_2}\ar[uur]_{\overline{g_1f_{2}}}&&&
}$$
\end{example}

\begin{theorem}\label{thm1} Let $R$ be a commutative unital ring, $E$ be a graph and $H$ be a hereditary subset of $E^0$. Suppose that $(E^0\backslash H, r^{-1}_E(E^0\backslash H), r_E, s_E)$ is a finite acyclic graph, $v\geq H$ for all $v\in E^0\backslash H$ and the set $s_E^{-1}(E^0\backslash H)\cap r^{-1}_E(H)$ is finite. Then $L_R(E)\cong L_R(E(H))$.
\end{theorem}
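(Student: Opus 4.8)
The plan is to write down an explicit Cuntz--Krieger $E(H)$-family inside $L_R(E)$, obtain from the Universal Homomorphism Property a homomorphism $\phi\colon L_R(E(H))\to L_R(E)$, and then show that $\phi$ is bijective. Concretely, I would set $\phi(w)=w$ for $w\in H$, $\phi(\alpha)=\alpha\alpha^{*}$ for the vertex $\alpha\in F(H)$, $\phi(e)=e$ and $\phi(e^{*})=e^{*}$ for $e\in s_E^{-1}(H)$, and $\phi(\overline{\alpha})=\alpha$, $\phi(\overline{\alpha}^{*})=\alpha^{*}$ for $\overline{\alpha}\in\overline{F}(H)$.

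The first task is to verify that these elements form a Cuntz--Krieger $E(H)$-family. The decisive observation is that every $\alpha=e_1\cdots e_n\in F(H)$ runs entirely outside $H$ except for its terminal vertex $r_E(\alpha)\in H$ (since $H$ is hereditary and only $r_E(e_n)\in H$). Hence for distinct $\alpha,\beta\in F(H)$ neither is a prefix of the other, so $\alpha^{*}\beta=0$ and the images of the vertices are pairwise orthogonal idempotents; relations (1)--(3) of Definition~\ref{Lpa} then follow from $\alpha^{*}\alpha=r_E(\alpha)$ and $e^{*}f=\delta_{ef}\,r_E(e)$. For (4) I would use the structural point that in $E(H)$ each vertex $\alpha\in F(H)$ emits the single edge $\overline{\alpha}$, so it is regular and CK-2 there reads $\alpha\alpha^{*}=S_{\overline{\alpha}}S_{\overline{\alpha}}^{*}$, an identity; while for $w\in H$ one has $s_{E(H)}^{-1}(w)=s_E^{-1}(w)$, so $w$ is regular in $E(H)$ exactly when it is regular in $E$, and CK-2 transfers verbatim from $L_R(E)$.

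For injectivity I would argue by a graded uniqueness theorem. The usual $\mathbb Z$-grading is not respected, since $\overline{\alpha}$ has degree $1$ whereas $\phi(\overline{\alpha})=\alpha$ has degree $l(\alpha)$; so I would regrade $L_R(E(H))$ by the weight map $w(\overline{\alpha})=l(\alpha)$ and $w(e)=1$, which turns $\phi$ into a graded homomorphism onto the usual grading of $L_R(E)$. As all weights are positive, the degree-zero component still contains every $rv$, and the Graded Uniqueness Theorem applies once I check $\phi(rv)\neq0$ for all vertices $v$ of $E(H)$ and $r\in R\setminus\{0\}$: this is immediate for $w\in H$, and for $\alpha\in F(H)$ it follows from $\alpha^{*}(r\alpha\alpha^{*})\alpha=r\,r_E(\alpha)\neq0$. (Alternatively one could invoke the generalized uniqueness Theorem~\ref{UniquenessThm}, checking condition (iii) on the cycles without exits of $E(H)$, all of which lie in $H$.)

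Surjectivity is where the three hypotheses enter and is the main obstacle. I would first show that every $v\in E^{0}\setminus H$ is a regular vertex of $E$: it emits at least one edge because $v\geq H$ while $v\notin H$, and $s_E^{-1}(v)$ is finite because its edges either stay in the finite acyclic graph $(E^{0}\setminus H,\ldots)$ or belong to the finite set $s_E^{-1}(E^{0}\setminus H)\cap r_E^{-1}(H)$. Writing $F_v(H)=\{\alpha\in F(H):s_E(\alpha)=v\}$ (finite, since $F(H)$ is finite), I would prove by induction on the length of the longest outside path issuing from $v$ (finite by acyclicity) that
\[
v=\sum_{\alpha\in F_v(H)}\alpha\alpha^{*}.
\]
Applying CK-2 at the regular vertex $v$ splits $s_E^{-1}(v)$ into the edges entering $H$, which are themselves length-one members of $F_v(H)$, and the edges $e$ with $r_E(e)\notin H$, whose range has strictly smaller height and may be replaced by its own decomposition with $e$ prepended to each summand. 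This exhibits each $v\in E^{0}\setminus H$, and similarly each edge of $E$ not already among the generators above, as the image under $\phi$ of an explicit element of $L_R(E(H))$; together with the generators directly in the image this proves $\phi$ surjective, whence $\phi$ is an isomorphism. The crux is this inductive decomposition, which simultaneously requires acyclicity and finiteness of the outside graph (so the induction is well-founded and the sums finite), the condition $v\geq H$ (so no outside vertex is a sink where the recursion would stall), and finiteness of the edges into $H$ (so each $v$ is regular and CK-2 is available).
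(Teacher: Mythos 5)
Your Cuntz--Krieger $E(H)$-family, the induced homomorphism, and the surjectivity argument are exactly those of the paper's proof (which defers the family verification and the epimorphism step to the corresponding arguments of Arklint--Ruiz); your explicit inductive decomposition $v=\sum_{\alpha\in F_v(H)}\alpha\alpha^{*}$ for $v\in E^{0}\setminus H$, together with the observation that every $\alpha\in F(H)$ leaves $H$ only at its last edge so that no element of $F(H)$ is a prefix of another, correctly fills in those deferred details. The one step that does not go through as written is your primary injectivity argument. The Graded Uniqueness Theorem quoted in Section 2 requires the \emph{domain} to carry the usual $\mathbb{Z}$-grading and allows the codomain to be an arbitrary $\mathbb{Z}$-graded ring; you instead regrade the domain $L_R(E(H))$ by $w(\overline{\alpha})=l(\alpha)$, and positivity of the weights is not by itself enough to restore the theorem: its proof rests on the structure of the canonical degree-zero component, and an ideal that is graded for your new grading need not be graded for the canonical one (e.g.\ $\overline{\alpha}\,q^{*}$ with $l(q)=l(\alpha)>1$ has new degree $0$ but canonical degree $1-l(\alpha)$). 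Two repairs are available. Either regrade the \emph{codomain} $L_R(E)$ instead, assigning weight $0$ to edges with both source and range outside $H$ and weight $1$ to all other edges; then every $\alpha\in F(H)$ has total weight $1$, your $\phi$ is graded with the canonical grading on the domain, and the quoted theorem applies verbatim. Or use the generalized uniqueness theorem, as you note parenthetically---this is what the paper actually does: since every cycle of $E(H)$ lies in $H$, for a distinguished path $\alpha$ one gets $\phi(\omega_\alpha)=\beta\lambda_\alpha\beta^{*}$ with $\lambda_\alpha$ a cycle without exits in $E$, whence $\langle\phi(\omega_\alpha)\rangle\cong R[x,x^{-1}]$. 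With either repair your proof is complete and follows the same route as the paper's.
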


\begin{proof} Let $\{e, v|e\in E^1, v\in E^0\}$ be a universal Cuntz-Krieger $E$-family. For $v\in E(H)^0$ define
\begin{align*}
Q_v  & = \begin{cases}
v & \text{if }\ v\in H,\\
\alpha\alpha^* & \text{if}\ v=\alpha\in F(H) \end{cases}
\end{align*}
and for $e\in E(H)^1$ define
\begin{align*}
T_e & = \begin{cases}
e & \text{if }\ e\in s^{-1}_E(H),\\
\alpha & \text{if }\ e=\overline{\alpha}\in \overline{F}(H).  \end{cases}
\end{align*}
The same argument as in the proof of the Theorem 3.8 of \cite{AR} (see also Lemma 3.7 of \cite{Na}) shows that $\{T_e, Q_v|e\in E(H)^1, v\in E(H)^0\}$ is a Cuntz-Krieger $E(H)$-family in $L_R(E)$. Let $\{t_e, q_v|e\in E(H)^1, v\in E(H)^0\}$ be a universal Cuntz-Krieger $E(H)$-family. By the universal homomorphism property of $L_R(E(H))$ there exists a $*$-homomorphism $\Psi:L_R(E(H))\rightarrow L_R(E)$ with $\Psi(q_v)=Q_v$ for each $v\in E(H)^0$ and $\Psi(t_e)=T_e$ for each $e\in E(H)^1$. Since $s_E^{-1}(E^0\backslash H)\cap r^{-1}_E(H)$ is finite, the same argument as in the proof of the Theorem 3.8 of \cite{AR} shows that $\Psi$ is epimorphism. Now let $\alpha$ be a distinguished path in $E(H)$ and $\omega_\alpha=\alpha\lambda_\alpha\alpha^*$, where $\lambda_\alpha$ is a cycle without exits that starts and ends at $r_{E(H)}(\alpha)$. The cycles in $E(H)$ come from cycles in $E$ all lying in the subgraph $(H, s^{-1}_E(H), s_E, r_E)$. Hence $\lambda_\alpha$ is a cycle without exits in $E$ that starts and ends at $r_{E(H)}(\alpha)$. If $\alpha$ is a distinguished vertex, then $\omega_\alpha=\lambda_\alpha$ and $\Psi(\omega_\alpha)=\omega_\alpha$. If $l(\alpha)\neq 0$, then $\alpha=\mu\overline{\beta}$ for some $\mu=e_1\cdots e_t\in \text{Path}(E)\backslash F(H)$ and $\beta\in F(H)$. Therefore $\Psi(\omega_\alpha)=\mu\beta\lambda_\alpha\beta^*\mu^*$ and so $<\Psi(\omega_\alpha)>\cong R[x, x^{-1}]$. Now let $v\in E(H)^0$ and $r\in R\backslash \{0\}$. If $v\in H$, then $\Psi(rq_v)=rv$ and by \cite[Proposition 3.4]{T}, $rv\neq 0$. Now assume that $v=\alpha$ for some $\alpha\in F(H)$. Hence $\Psi(rq_v)=r\alpha\alpha^*$ and by \cite[Proposition 4.9]{T}, $r\alpha\alpha^*\neq 0$. Thus by the generalized uniqueness theorem, $\Psi$ is injective. Therefore $\Psi$ is an isomorphism and the result follows.
\end{proof}

\begin{definition}{\cite[Definitions 3.2, 3.3 and 3.9]{AR}}\label{def2} {\rm
Let $E$ be a graph and $n$ be a positive integer.
\begin{itemize}
\item[(i)] For any vertex $v_0\in E^0$ define a graph $E(v_0, n)$ as follows:
$$E(v_0, n)^0=E^0\cup \{v_1, v_2, \cdots, v_n\},$$
$$E(v_0, n)^1=E^1\cup \{e_1, e_2, \cdots, e_n\}$$
$s_{E(v_0, n)}(e)=s_E(e)$ and $r_{E(v_0, n)}(e)=r_E(e)$ for each
$e\in E^1$. $r_{E(v_0, n)}(e_i)=v_{i-1}$ and $s_{E(v_0, n)}(e_i)=v_i$ for each $i$.
\item[(ii)] For each edge $e_0\in E^1$ define a graph $E(e_0, n)$ as follows:
$$E(e_0, n)^0=E^0\cup \{v_1, v_2, \cdots, v_n\},$$
$$E(e_0, n)^1=\{e_1, e_2, \cdots, e_{n+1}\}\cup E^1\backslash \{e_0\} $$
$s_{E(e_0, n)}(e)=s_E(e)$ and $r_{E(e_0, n)}(e)=r_E(e)$ for each
$e\in E^1\backslash \{e_0\}$. $r_{E(e_0, n)}(e_i)=v_{i-1}$ for each $2\leq i\leq n+1$, $s_{E(e_0, n)}(e_i)=v_i$ for each $1\leq i\leq n$, $r_{E(e_0, n)}(e_1)=r_E(e_0)$ and $s_{E(e_0, n)}(e_{n+1})=s_E(e_0)$.
\item[(iii)] For any vertex $v_0\in E^0$ define a graph $E'(v_0, n)$ as follows:
$$E'(v_0, n)^0=E^0\cup \{v_1, v_2, \cdots, v_n\},$$
$$E'(v_0, n)^1=E^1\cup \{e_1, e_2, \cdots, e_n\}$$
$s_{E'(v_0, n)}(e)=s_E(e)$ and $r_{E'(v_0, n)}(e)=r_E(e)$ for each
$e\in E^1$. $r_{E'(v_0, n)}(e_i)=v_{0}$ and $s_{E'(v_0, n)}(e_i)=v_i$ for each $i$.
\end{itemize}
}
\end{definition}

\begin{example}\label{exa1} Consider the graph $E$ given by
$$\xymatrix{
&&&{\bullet}_u\ar[dr]^{\beta}&\\
&&{\bullet}_{v}\ar[ur]^{\alpha}&&{\bullet}_{w}\ar[ll]^{\gamma}\\
}$$

Thus $E(v, 3)$ is the graph\\

$$\xymatrix{
&&&&{\bullet}_u\ar[dr]^{\beta}&\\
{\bullet}_{v_3}\ar[r]^{e_3}&{\bullet}_{v_2}\ar[r]^{e_2}&{\bullet}_{v_1}\ar[r]^{e_1}&{\bullet}_{v}\ar[ur]^{\alpha}&&{\bullet}_{w}\ar[ll]^{\gamma}\\
}$$

$E(\alpha, 3)$ is the graph\\

$$\xymatrix{
{\bullet}_{v_2}\ar[r]^{e_2}&{\bullet}_{v_1}\ar[r]^{e_1}&{\bullet}_u\ar[dr]^{\beta}&\\
{\bullet}_{v_3}\ar[u]^{e_3}&{\bullet}_{v}\ar[l]^{e_4}&&{\bullet}_{w}\ar[ll]^{\gamma}\\
}$$

and $E'(v, 3)$ is the graph\\

$$\xymatrix{
&{\bullet}_{v_1}\ar[dr]^{e_1}&&{\bullet}_u\ar[dr]^{\beta}&\\
&{\bullet}_{v_2}\ar[r]^{e_2}&{\bullet}_{v}\ar[ur]^{\alpha}&&{\bullet}_{w}\ar[ll]^{\gamma}\\
&{\bullet}_{v_3}\ar[ur]_{e_3}&&&\\
}$$
\end{example}

\begin{corollary}\label{cor0} Let $R$ be a commutative unital ring, $E$ be a graph, $v_0\in E^0$ be a vertex and $n$ be a positive integer. Then $L_R(E(v_0, n))\cong L_R(E'(v_0, n))$.
\end{corollary}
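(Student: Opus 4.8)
Let me understand what the two graphs $E(v_0,n)$ and $E'(v_0,n)$ look like. In $E(v_0,n)$ we attach a tail $v_n \to v_{n-1} \to \cdots \to v_1 \to v_0$, i.e.\ a linear path of new edges $e_n,\dots,e_1$ feeding into the original vertex $v_0$. In $E'(v_0,n)$ we instead attach $n$ separate edges $e_1,\dots,e_n$, each going from a new source vertex $v_i$ directly into $v_0$. In both cases the newly added vertices $v_1,\dots,v_n$ are sources that emit a single edge, and every new vertex reaches the original part of the graph. So both constructions add finitely many new vertices that are acyclic and each reaches $E^0$, while leaving $E^0$ itself hereditary in the enlarged graph.

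The plan is to apply Theorem \ref{thm1} to both graphs simultaneously, using $H=E^0$ as the hereditary subset. First I would check that $E^0$ is hereditary in each of $E(v_0,n)$ and $E'(v_0,n)$: this holds because the only edges leaving $E^0$ are the original edges of $E$ (the new edges $e_i$ have range in $E^0\cup\{v_0,\dots,v_{n-1}\}$ but source among the new vertices), so no vertex of $E^0$ can reach a new vertex. Next I would verify the three hypotheses of Theorem \ref{thm1} for $H=E^0$ in each graph: the complement $E^0\backslash H$ consists exactly of the new vertices $\{v_1,\dots,v_n\}$, which form a finite acyclic graph (a linear chain in the first case, an edgeless set of sources in the second); every new vertex reaches $H=E^0$ (by following the tail down to $v_0$, or the single edge into $v_0$); and the set of edges from the new vertices into $H$ is finite. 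With these verified, Theorem \ref{thm1} gives $L_R(E(v_0,n))\cong L_R(E(v_0,n)(H))$ and $L_R(E'(v_0,n))\cong L_R(E'(v_0,n)(H))$.

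The key remaining step is then to compute the collapsed graphs $E(v_0,n)(H)$ and $E'(v_0,n)(H)$ from Definition \ref{def1} and show they are isomorphic (indeed equal as graphs). Here $F(H)$ consists of paths that start outside $H$ and land in $H$ at their final edge. For $E(v_0,n)$, such a path from a new vertex $v_i$ must traverse the entire tail $e_i e_{i-1}\cdots e_1$ to reach $v_0\in H$; there are exactly $n$ such paths, one starting at each $v_i$. For $E'(v_0,n)$, the paths in $F(H)$ are just the single edges $e_1,\dots,e_n$ — again exactly $n$ of them, each landing at $v_0$. In both collapsed graphs the new vertex set becomes $H\cup F(H)=E^0\cup\{\text{$n$ elements mapping into $v_0$}\}$, and the new edges are $s^{-1}(H)\cup\overline{F}(H)$, where $\overline{F}(H)$ contributes exactly $n$ edges each with range $v_0$. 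Thus both collapsed graphs are canonically identified with the same graph: $E$ with $n$ extra sources each emitting one edge into $v_0$. I would make the bijection on vertices and edges explicit and check it respects source and range maps, giving a graph isomorphism $E(v_0,n)(H)\cong E'(v_0,n)(H)$ and hence a Leavitt path algebra isomorphism.

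I expect the main obstacle to be purely bookkeeping rather than conceptual: carefully matching up the elements of $F(H)$ in the two constructions and confirming that the source/range data of $\overline{F}(H)$ coincide. In $E(v_0,n)(H)$ the distinguished path $v_i$ collapses to the single path $\alpha_i=e_i\cdots e_1\in F(H)$ with $r(\alpha_i)=v_0$, whereas in $E'(v_0,n)(H)$ the corresponding element of $F(H)$ is the single edge $e_i$ with $r(e_i)=v_0$; the identification sends one to the other, and under Definition \ref{def1} both yield a ghost-style edge $\overline{\alpha_i}$ (resp.\ $\overline{e_i}$) from that $F(H)$-vertex into $v_0$. Since the two collapsed graphs then have identical vertex sets, edge sets, and incidence data up to this relabeling, the isomorphism follows, completing the proof.
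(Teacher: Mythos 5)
Your proposal is correct and follows essentially the same route as the paper, which simply invokes Theorem \ref{thm1} (via the argument of Corollary 3.10 of \cite{AR}): taking $H=E^0$, the hypotheses of Theorem \ref{thm1} hold for both graphs and the collapsed graph $E(v_0,n)(H)$ is precisely $E$ with $n$ new sources each emitting one edge into $v_0$, i.e.\ a copy of $E'(v_0,n)$. The only nitpick is your aside that the new vertices of $E(v_0,n)$ are all sources (only $v_n$ is), but this plays no role since Theorem \ref{thm1} does not require it.
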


\begin{proof} The similar argument as in the proof of the Corollary 3.10 of \cite{AR} shows that the result follows from Theorem \ref{thm1}.
\end{proof}

\begin{proposition}\label{prop0} Let $R$ be a commutative unital ring, $E$ be a graph, $e_0\in E^1$ be an edge and $n$ be a positive integer. Then $L_R(E(r_E(e_0), n))\cong L_R(E(e_0, n))$.
\end{proposition}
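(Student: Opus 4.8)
The plan is to imitate the proof of Theorem \ref{thm1}: build an explicit Cuntz-Krieger $E(e_0,n)$-family inside $L_R(E(r_E(e_0),n))$, get a homomorphism from the Universal Homomorphism Property, and prove it is an isomorphism via the generalized uniqueness theorem (Theorem \ref{UniquenessThm}). Write $v_0=r_E(e_0)$ and let $\{e,v\}$ be the canonical generators of $L_R(E(r_E(e_0),n))$. Observe that the two graphs share \emph{all} vertices and \emph{all} edges except that $E(e_0,n)$ deletes $e_0$ and inserts one extra edge (call it $e_{n+1}$) from $s_E(e_0)$ to $v_n$, while the tail edges $e_1,\dots,e_n$ and the new vertices $v_1,\dots,v_n$ are identical on both sides. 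Let $\gamma=e_ne_{n-1}\cdots e_1$ be the tail path from $v_n$ down to $v_0$; telescoping the (CK-2) relations at the regular vertices $v_1,\dots,v_n$ gives $\gamma\gamma^*=v_n$, while $\gamma^*\gamma=v_0$. Accordingly I set $Q_v=v$ for $v\in E^0$, $Q_{v_i}=v_i$, $T_h=h$ for every edge $h\neq e_0$ of $E$, $T_{e_i}=e_i$ for $1\le i\le n$, and crucially $T_{e_{n+1}}=e_0\gamma^*$.

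Next I would check that $\{Q_v,T_h\}$ is a Cuntz-Krieger $E(e_0,n)$-family. The relations for the old edges are inherited, so the only genuinely new computations are $T_{e_{n+1}}^*T_{e_{n+1}}=\gamma e_0^*e_0\gamma^*=\gamma v_0\gamma^*=\gamma\gamma^*=v_n=Q_{v_n}$, and the orthogonality $T_{e_{n+1}}^*T_h=\gamma e_0^*h=0$ for $h\neq e_0$, which holds precisely because $e_0$ is deleted in $E(e_0,n)$, so no remaining edge equals it. For (CK-2), the only vertex whose emitted edges change is $s_E(e_0)$, where $e_0$ has been replaced by $e_{n+1}$; since $T_{e_{n+1}}T_{e_{n+1}}^*=e_0\gamma^*\gamma e_0^*=e_0v_0e_0^*=e_0e_0^*$, the sum $\sum_{s(h)=s_E(e_0)}T_hT_h^*$ reproduces exactly the (CK-2) relation holding at $s_E(e_0)$ in $L_R(E(r_E(e_0),n))$ (the loop case $s_E(e_0)=r_E(e_0)$ is covered by the same identity). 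At $v_1,\dots,v_n$ and at all other vertices the relations are untouched, and the regularity of $s_E(e_0)$ is the same in both graphs. The Universal Homomorphism Property then yields a $*$-homomorphism $\Phi\colon L_R(E(e_0,n))\to L_R(E(r_E(e_0),n))$.

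Surjectivity is then immediate: the image contains every vertex, every old edge $h\neq e_0$ and every tail edge $e_i$, while $e_0$ is recovered as $e_0=T_{e_{n+1}}\gamma=e_0\gamma^*\gamma=e_0v_0$, so all generators of $L_R(E(r_E(e_0),n))$ lie in the image. For injectivity I would apply Theorem \ref{UniquenessThm}: condition (iii)(a) holds because each $\Phi(rQ_v)$ equals $rv$ or $rv_i$, which is nonzero by \cite[Proposition 3.4]{T} (and \cite[Proposition 4.9]{T} where a range projection of a path is involved).

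The hard part will be condition (iii)(b) on the distinguished paths. Here I would argue, exactly as in the proof of Theorem \ref{thm1}, that every cycle without exits in $E(e_0,n)$ either lies in the region untouched by the subdivision, or runs through the inserted chain $s_E(e_0)\to v_n\to\cdots\to v_1\to r_E(e_0)$; in the latter case the segment $e_{n+1}e_n\cdots e_1$ is carried by $\Phi$ to $e_0\gamma^*\gamma=e_0$, so the cycle maps to the corresponding cycle without exits through $e_0$ in $E(r_E(e_0),n)$. In either case $\Phi(\omega_\alpha)$ has the form $\beta\lambda_\alpha\beta^*$ for a cycle without exits $\lambda_\alpha$, whence $\langle\Phi(\omega_\alpha)\rangle\cong R[x,x^{-1}]$ and $\Phi$ is injective, hence an isomorphism. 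The delicate point I expect to watch is verifying that subdividing a single edge neither creates nor destroys exits along cycles, so that the bookkeeping of distinguished paths on the two sides matches up exactly.
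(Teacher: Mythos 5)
Your proof is correct and follows essentially the same strategy as the paper's: exhibit an explicit Cuntz--Krieger family, invoke the Universal Homomorphism Property, check surjectivity on generators, and conclude injectivity from the generalized uniqueness theorem by tracking how cycles without exits pass through the subdivision. The only difference is the direction of the map --- the paper defines $\Psi\colon L_R(E(r_E(e_0),n))\to L_R(E(e_0,n))$ sending $e_0\mapsto e_{n+1}e_n\cdots e_1$, whereas you construct the inverse homomorphism sending $e_{n+1}\mapsto e_0\gamma^*$ --- and this changes none of the substance.
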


\begin{proof} Let $r_E(e_0)=v_0$ and $\{e, v|e\in E(e_0, n)^1, v\in E(e_0, n)^0\}$ be a universal Cuntz-Krieger $E(e_0, n)$-family. For $v\in E(v_0, n)^0$ define
$Q_v=v$ and for $e\in E(v_0, n)^1$ define
\begin{align*}
T_e & = \begin{cases}
e & \text{if }\ e\neq e_0,\\
e_{n+1}e_{n}\cdots e_1 & \text{if }\ e=e_0.  \end{cases}
\end{align*}
The same argument as in the proof of the Proposition 3.5 of \cite{AR} shows that $\{T_e, Q_v|e\in E(v_0, n)^1, v\in E(v_0, n)^0\}$ is a Cuntz-Krieger $E(v_0, n)$-family in $L_R(E(e_0, n))$. Let $\{t_e, q_v|e\in E(v_0, n)^1, v\in E(v_0, n)^0\}$ be a universal Cuntz-Krieger $E(v_0, n)$-family. By the universal homomorphism property of $L_R(E(v_0, n))$ there exists a $*$-homomorphism \\$\Psi:L_R(E(v_0, n))\rightarrow L_R(E(e_0, n))$ that $\Psi(q_v)=Q_v$ for each $v\in E(v_0, n)^0$ and $\Psi(t_e)=T_e$ for each $e\in E(v_0, n)^1$. The same argument as in the proof of the Proposition 3.5 of \cite{AR} shows that $\Psi$ is an epimorphism.

Now let $\alpha$ be a distinguished path in $E(v_0, n)$ and $\omega_\alpha=\alpha\lambda_\alpha\alpha^*$, where $\lambda_\alpha$ is a cycle without exits that starts and ends at $r_{E(v_0, n)}(\alpha)$. Suppose $\lambda_\alpha=f_1f_2\cdots f_m$. If $s_{E(v_0, n)}(f_i)\neq s_{E(v_0, n)}(e_0)$ for each $i$, then $\lambda_\alpha$ is a cycle without exits in $E(e_0, n)$. Thus $\Psi(t_{\lambda_\alpha})=\lambda_\alpha$ and so $<\Psi(\omega_\alpha)>\cong R[x, x^{-1}]$. Now assume that $s_{E(v_0, n)}(f_i)=s_{E(v_0, n)}(e_0)$, for some $i$. Since $\lambda_\alpha$ is a cycle without exits, $\lambda_\alpha=e_0f_{i+1}f_{i+2}\cdots f_{i-1}$. $\Psi(t_{\lambda_\alpha})=e_{n+1}e_n\cdots e_1f_{i+1}f_{i+2}\cdots f_{i-1}$ and $e_{n+1}e_n\cdots e_1f_{i+1}f_{i+2}\cdots f_{i-1}$ is a cycle without exits in $E(e_0, n)$. Thus $<\Psi(\omega_\alpha)>\cong R[x, x^{-1}]$. Also for each $v\in E(v_0, n)^0$ and $r\in R\backslash \{0\}$, $\Psi(rq_v)=rv\neq 0$. It follows from the generalized uniqueness theorem that $\Psi$ is injective. Therefore $\Psi$ is an isomorphism and the result follows.
\end{proof}

When $E$ is a row-finite graph with no sinks and $k$ is a field, Proposition 3.1 of \cite{ALPS} shows that, there exists a row-finite graph $G$ with no sinks and no sources such that the Leavitt path algebras $L_k(E)$ and $L_k(G)$ are Morita equivalent. Also when $E$ is a finite graph with no sinks and at least two vertices, Proposition 13 of \cite{H1} shows that, there exists a finite graph $G$ with no sinks and no sources such that the Leavitt path algebras $L_k(E)$ and $L_k(G)$ are graded Morita equivalent (See also \cite[Corollary 3.18]{N}). The following corollary improves
these known (graded) Morita equivalences to isomorphisms.

\begin{corollary}\label{cor1} Let $R$ be a commutative unital ring and $E$ be a finite graph with no sinks. Then there exists a finite graph $G$ with no sinks and no sources such that the Leavitt path algebras $L_R(E)$ and $L_R(G)$ are isomorphic.
\end{corollary}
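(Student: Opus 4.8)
The plan is to produce a single hereditary set $H$ to which Theorem \ref{thm1} applies and for which the auxiliary graph $E(H)$ of Definition \ref{def1} has all of its sources in an easily removable position; then Corollary \ref{cor0} and Proposition \ref{prop0} will finish the argument. Concretely, I would take $H$ to be the hereditary closure $H_E(C)$ of the set $C$ of all vertices lying on a cycle of $E$; equivalently, $H$ is the set of vertices reachable from some cycle. Since $E$ is finite with no sinks, following edges forward from any vertex must eventually repeat a vertex, so every vertex reaches a cycle; in particular $C\neq\emptyset$ and every vertex of $E^0\setminus H$ reaches $H$.

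First I would verify the three hypotheses of Theorem \ref{thm1} for this $H$. The induced subgraph on $E^0\setminus H$ is finite and acyclic, because a cycle through a vertex of $E^0\setminus H$ would place that vertex in $C\subseteq H$; the condition $v\ge H$ for all $v\in E^0\setminus H$ holds because every vertex reaches a cycle and cycles lie in $H$; and $s_E^{-1}(E^0\setminus H)\cap r_E^{-1}(H)$ is finite because $E$ is finite. Theorem \ref{thm1} then gives $L_R(E)\cong L_R(E(H))$, and the finiteness of the acyclic top part guarantees that $F(H)$, hence $E(H)$, is finite.

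Next I would analyze $E(H)$. The crucial point is that the induced subgraph of $E$ on $H$ has no sinks and no sources: no sinks, because each $h\in H$ emits an edge in $E$ and, as $H$ is hereditary, that edge stays in $H$; no sources, because each $h\in H$ is reachable from a cycle, so either $h$ lies on a cycle and receives a cycle edge from within $H$, or $h$ receives the last edge of a path from a cycle, whose source again lies in $H$. In $E(H)$ the $H$-part is exactly this subgraph, while the only additional vertices are the elements of $F(H)$, each of which (by the definitions of $s_{E(H)}$ and $r_{E(H)}$) is a source carrying a single edge into some $h\in H$. Thus every source of $E(H)$ points into $H$ at a vertex that already receives an edge from within $H$.

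Finally I would remove these sources. Grouping the sources of $E(H)$ by their common target $h\in H$, Corollary \ref{cor0} replaces the $n$ sources pointing at a given $h$ by a single tail of length $n$ ending at $h$; then, choosing an edge $e_0$ of the $H$-subgraph with $r_E(e_0)=h$ (which exists by the previous paragraph), Proposition \ref{prop0} absorbs this tail into $e_0$ by subdividing it, introducing no new source and preserving the absence of sinks. Performing this for each target in turn yields a finite graph $G$ with no sinks and no sources and $L_R(G)\cong L_R(E(H))\cong L_R(E)$. I expect the main obstacle to be precisely the choice of $H$: it must simultaneously satisfy the hypotheses of Theorem \ref{thm1} and force the $H$-part to be source-free, for otherwise a source of $E(H)$ could point to a vertex of $H$ with no incoming edge in $H$, leaving no edge available for the subdivision in Proposition \ref{prop0}. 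Taking $H$ to be the hereditary closure of the cyclic vertices is exactly what secures this, and it also explains why one cannot simply invoke the source-elimination of \cite{ALPS}, which yields only Morita equivalence rather than the isomorphism required here.
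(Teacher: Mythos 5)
Your proposal is correct and follows essentially the same route as the paper: the paper obtains its hereditary set $F^0$ by iterated source elimination, which in a finite graph with no sinks yields exactly your $H_E(C)$ (the vertices reachable from a cycle), and then likewise applies Theorem \ref{thm1} followed by Corollary \ref{cor0} and Proposition \ref{prop0} finitely many times. Your write-up merely spells out in more detail why the sources of $E(H)$ can all be absorbed, a step the paper leaves implicit.
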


\begin{proof} Let $E_0=E$ and remove the sources of $E_0$ we get a subgraph $E_1$ of $E_0$. Remove the sources of $E_1$, we get a subgraph $E_2$ of $E_1$ (see \cite[Definition 1.2]{ALPS}). Since $E$ is a finite graph with no sinks, after finitely many times, we get a subgraph $F=E_n$ of $E$ that has no sinks and no sources. By induction we see that $F^0$ is a hereditary subset of $E^0$. We show that $(E^0\backslash F^0, r_E^{-1}(E^0\backslash F^0), r_E, s_E)$ is a finite acyclic graph that for any $v\in E^0\backslash F^0$ there exists a path from $v$ to $F^0$ in $E$. Since $E$ is a finite graph, $(E^0\backslash F^0, r_E^{-1}(E^0\backslash F^0), r_E, s_E)$ is finite.
Let $e_1e_2\cdots e_r$ be a cycle in $(E^0\backslash F^0, r_E^{-1}(E^0\backslash F^0), r_E, s_E)$. Then $e_1e_2\cdots e_r$ is a cycle in $E$ and so it is a cycle in $E_1$. Inductively, $e_1e_2\cdots e_r$ is a cycle in $E_i$ for each $0\leq i\leq n$. Since $s_E(e_1)\in E^0\backslash F^0$, $s_E(e_1)$ is a source in $E_j$ for some $0\leq j\leq n$ which is a contradiction. Therefore  $(E^0\backslash F^0, r_E^{-1}(E^0\backslash F^0), r_E, s_E)$ is acyclic. Let $v\in E^0\backslash F^0$. Then there exists $0\leq j\leq n$ such that $v$ is a source in $E_j$. Since $E$ has no sinks, there exists an edge $e_1\in E^1$ such that $s_E(e_1)=v$ and $r_E(e_1)\in E_{j+1}^{0}$. If $j+1=n$, then $e_1$ is a path from $v$ to $F^0$. Assume that $j+1<n$. If $r_E(e_1)$ is not a source in $E_{i}$ for each $i\geq j+1$, then $r_E(e_1)\in F^0$ and $e_1$ is a path from $v$ to $F^0$. If $r_E(e_1)$ is a source in $E_{i}$ for some $i\geq j+1$, then there exists an edge $e_{2}\in E^1$ such that $s_E(e_{2})=r_E(e_1)$ and $r_E(e_{2})\in E_{i+1}^{0}$. Continuing in this way, since $E$ is a finite graph, we get a path from $v$ to $F^0$. Thus by Theorem \ref{thm1}, $L_R(E)\cong L_R(E(F^0))$. By definition, $E(F^0)^0=F^0\cup F(F^0)$. Since $(E^0\backslash F^0, r_E^{-1}(E^0\backslash F^0), r_E, s_E)$ is a finite acyclic graph, $F(F^0)=\{\alpha|\alpha=e_1e_2\cdots e_n\in \text{Path}(E), s_E(e_n)\not\in F^0, r_E(e_n)\in F^0 \}$ is a finite set. Assume that $F(F^0)=\{\alpha_1, \alpha_2,\cdots, \alpha_p\}$ for some positive integer $p$. $E(F^0)^1=s^{-1}_E(F^0)\cup \{\overline{\alpha_1}, \overline{\alpha_2},\cdots, \overline{\alpha_p}\}$. Let $r_{E(F^0)}(\overline{\alpha_1})=r_E(\alpha_1)=v_1\in F^0$ and assume that $r^{-1}_{E(F^0)}(v_1)=r^{-1}_F(v_1)\cup \{\overline{\alpha_{1_1}}, \overline{\alpha_{1_2}},\cdots, \overline{\alpha_{1_i}}\}$ for some $1\leq i\leq p$, where $\overline{\alpha_{1_1}}=\overline{\alpha_1}$. Remove the vertices $\alpha_{1_1}, \alpha_{1_2}, \cdots, \alpha_{1_i}$ of $E(F^0)$, we get a graph $G_1$ such that $G_1'(v_1, i)=E(F^0)$. By Corollary \ref{cor0}, $L_R(E(F^0))\cong L_R(G_1(v_1, i))$. $F$ has no sources, then there exists an edge $e_1\in F^1$ such that $r_F(e_1)=r_E(e_1)=v_1$ and so by Proposition \ref{prop0}, $L_R(G_1(v_1, i))\cong L_R(G_1(e_1, i))$. Therefore $L_R(E(F^0))\cong L_R(G_1(e_1, i))$. The above procedure shows that $G_1(e_1, i)$ is a finite graph with no sinks and with $p-i$ sources. Continuing in this way, after finitely many steps we get a finite graph $G$ with no sinks and no sources that $L_R(E)\cong L_R(G)$.
\end{proof}

\begin{corollary}\label{cor2} Let $E$ be a graph. Then the following are equivalent:
\begin{itemize}
\item[(1)] $E$ is a finite graph with no sinks;
\item[(2)] For every commutative unital ring $R$, $L_R(E)$ is isomorphic to an algebraic Cuntz-Krieger algebra;
\item[(3)] There exists a field $k$ such that $L_k(E)$ is isomorphic to an algebraic Cuntz-Krieger algebra;
\item[(4)] $E$ is a finite graph and for every commutative unital ring $R$, $L_R(E)$ is strongly $\mathbb{Z}$-graded;
\item[(5)] $C^*(E)$ is unital and $rank(K_0(C^*(E)))=rank(K_1(C^*(E)))$.
\end{itemize}
\end{corollary}

\begin{proof} $(1) \Rightarrow (2)$ follows from Corollary \ref{cor1}.

$(2) \Rightarrow (3)$ is obvious.

For $(3) \Rightarrow (1)$, suppose that there exist a field $k$ and a finite graph $E'$ with no sinks and no sources such that $L_k(E)\cong\mathcal{CK}_k(E')$. Hence $L_k(E)$ is unital and so $E$ has a finite number of vertices. Then, by \cite[Corollary 6.17]{RT}, $E$ has no singular vertices. Therefore $E$ is a finite graph with no sinks and the result follows.

$(1) \Leftrightarrow (4)$ follows from \cite[Theorem 3.15]{H3}.

$(1) \Leftrightarrow (5)$ follows from \cite[Theorem 3.12]{AR}.
\end{proof}

Let $(G, +)$ be an abelian group. A finite set of elements $\{g_1, g_2, \cdots, g_l\}\subseteq G$ is called linearly independent if whenever $\sum_{i=1}^{l}n_ig_i=0$ for $n_1, \cdots, n_l\in \mathbb{Z}$, then $n_i=0$ for each $1\leq i\leq l$. Any two maximal linearly independent sets in $G$ have the same cardinality. If there exits a maximal linearly independent set in $G$, the cardinality of this set is called the rank $rank(G)$ of $G$ and if there is no maximal linearly independent set in $G$, the rank $rank(G)$ of $G$ is defined to be infinite.

Now we are ready to proof the main result of this section.

\begin{theorem}\label{thm2} Let $k$ be a field such that $rank(k^{\times})< \infty$ and $E$ be a graph. Then the following are equivalent:
\begin{itemize}
\item[(1)] $E$ is a finite graph with no sinks;
\item[(2)] $L_k(E)$ is isomorphic to an algebraic Cuntz-Krieger algebra;
\item[(3)] $E$ is a finite graph and $L_k(E)$ is strongly $\mathbb{Z}$-graded;
\item[(4)] $C^*(E)$ is unital and $rank(K_1(C^*(E)))=rank(K_0(C^*(E)))$;
\item[(5)] $L_k(E)$ is unital and $rank(K_1(L_k(E)))=(rank(k^{\times})+1)rank(K_0(L_k(E)))$.
\end{itemize}
\end{theorem}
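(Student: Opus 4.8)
My plan is to take the equivalences $(1)\Leftrightarrow(2)\Leftrightarrow(3)\Leftrightarrow(4)$ from Corollary~\ref{cor2} (which holds over any commutative unital ring, hence over the field $k$) as already established, and to adjoin $(5)$ by proving the single equivalence $(4)\Leftrightarrow(5)$. Condition $(5)$ is meant to be the purely algebraic shadow of the analytic criterion $(4)$, so the natural strategy is to place the $K$-theory of $L_k(E)$ next to that of $C^*(E)$: both are controlled by the same integer matrix, and I only need to track how the group $k^{\times}$ of units enters on the algebraic side. I will use at the outset that $L_k(E)$ is unital iff $E^0$ is finite iff $C^*(E)$ is unital, so the unitality clauses of $(4)$ and $(5)$ are literally the same hypothesis.

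Write $A_E$ for the vertex matrix of $E$ and let $M:=I-A_E^{t}$ be the boundary map $\mathbb{Z}^{E^0_{\mathrm{reg}}}\to\mathbb{Z}^{E^0}$, whose source is indexed by the \emph{regular} vertices. The standard six-term sequence for graph $C^*$-algebras gives $K_0(C^*(E))\cong\operatorname{coker}M$ and $K_1(C^*(E))\cong\ker M$. On the algebraic side the long exact sequence of Ara--Brustenga--Corti\~{n}as, fed with $K_0(k)=\mathbb{Z}$, $K_1(k)=k^{\times}$ and $K_{-1}(k)=0$ (the last because $k$ is a field), collapses to $K_0(L_k(E))\cong\operatorname{coker}M$ together with the short exact sequence
\[
0\longrightarrow\operatorname{coker}\big(M_{k^{\times}}\big)\longrightarrow K_1(L_k(E))\longrightarrow\ker M\longrightarrow 0,
\]
where $M_{k^{\times}}\colon (k^{\times})^{E^0_{\mathrm{reg}}}\to(k^{\times})^{E^0}$ is induced by the integer matrix $M$. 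Hence $rank(K_0(L_k(E)))=rank(K_0(C^*(E)))$, and by additivity of rank on the displayed sequence
\[
rank(K_1(L_k(E)))=rank\,\operatorname{coker}\big(M_{k^{\times}}\big)+rank(K_1(C^*(E))).
\]

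The heart of the argument is computing $rank\,\operatorname{coker}(M_{k^{\times}})$. Since $M_{k^{\times}}=M\otimes_{\mathbb{Z}}\mathrm{id}_{k^{\times}}$ and $-\otimes_{\mathbb{Z}}\mathbb{Q}$ is exact, tensoring the cokernel presentation with $\mathbb{Q}$ identifies $\operatorname{coker}(M_{k^{\times}})\otimes\mathbb{Q}$ with the cokernel of $M\otimes\mathrm{id}_{\mathbb{Q}^{s}}$, where $s:=rank(k^{\times})<\infty$ and $k^{\times}\otimes_{\mathbb{Z}}\mathbb{Q}\cong\mathbb{Q}^{s}$. That map is a Kronecker product, so its $\mathbb{Q}$-rank equals $s\cdot rank_{\mathbb{Q}}(M)$, giving
\[
rank\,\operatorname{coker}\big(M_{k^{\times}}\big)=s\big(|E^0|-rank_{\mathbb{Q}}M\big)=s\cdot rank(K_0(C^*(E))).
\]
Substituting into the previous display yields $rank(K_1(L_k(E)))=s\cdot rank(K_0(C^*(E)))+rank(K_1(C^*(E)))$, and since $rank(K_0(L_k(E)))=rank(K_0(C^*(E)))$, condition $(5)$ becomes
\[
s\cdot rank(K_0(C^*(E)))+rank(K_1(C^*(E)))=(s+1)\,rank(K_0(C^*(E))),
\]
i.e.\ $rank(K_1(C^*(E)))=rank(K_0(C^*(E)))$. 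Together with the shared unitality this is precisely $(4)$, so $(4)\Leftrightarrow(5)$ and the theorem follows.

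I expect the delicate step to be the cokernel-rank computation, and it is exactly here that $rank(k^{\times})<\infty$ is unavoidable: when $s=\infty$ the product $s\cdot rank(K_0)$ is meaningless, and the identity genuinely fails. For $E$ a single vertex with $n\geq 2$ loops one has $L_k(E)=L(1,n)$ and $rank(K_0)=0$, yet $\operatorname{coker}(M_{k^{\times}})=k^{\times}/(k^{\times})^{\,n-1}$ can have infinite rank (e.g.\ $k=\mathbb{Q}$), so $rank(K_1)=\infty$ while $(s+1)rank(K_0)=0$; this is the promised example showing the hypothesis cannot be dropped. A secondary point to keep straight is that the source of $M$ is $\mathbb{Z}^{E^0_{\mathrm{reg}}}$ rather than $\mathbb{Z}^{E^0}$: the defect $|E^0|-|E^0_{\mathrm{reg}}|$, counting the singular vertices, is what forces $rank(K_0(C^*(E)))\neq rank(K_1(C^*(E)))$, and this is ultimately what ties $(4)$---and now $(5)$---back to \emph{finite with no sinks}.
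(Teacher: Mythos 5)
Your proof is correct, but it takes a different route from the paper's. The paper handles $(5)$ by citing \cite[Theorem 8.1]{GRTW} as a black box: that theorem gives the identity $|E^0_{\text{sing}}|=(rank(k^{\times})+1)rank(K_0(L_k(E)))-rank(K_1(L_k(E)))$, so $(2)\Rightarrow(5)$ is immediate (a graph with no sinks and no sources has $|E^0_{\text{sing}}|=0$), and $(5)\Rightarrow(1)$ follows because unitality forces $E^0$ finite and the identity then forces $|E^0_{\text{sing}}|=0$. You instead prove $(4)\Leftrightarrow(5)$ by running the Ara--Brustenga--Corti\~{n}as long exact sequence yourself, computing $rank\,\operatorname{coker}(M_{k^{\times}})=s\cdot rank(K_0(C^*(E)))$ via tensoring with $\mathbb{Q}$ and the Kronecker-product rank formula --- which is, in effect, a self-contained re-derivation of the GRTW identity. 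What your version buys is transparency about exactly where $rank(k^{\times})<\infty$ enters; what the paper's version buys is brevity and the fact that \cite{GRTW} already covers non-row-finite graphs, whereas you should be slightly careful that the exact sequence you invoke is stated for arbitrary graphs with the source indexed by $E^0_{\text{reg}}$ (this is the form proved in \cite{GRTW}, not the original row-finite setting of \cite{ABC}). Two minor corrections: the clause ``the ideal generated by $f$ equals the ideal generated by $e_{11}\otimes g$'' plays no role here, but your closing illustration of why the finiteness hypothesis is needed is wrong --- for a single vertex with $n$ loops, $\operatorname{coker}(M_{k^{\times}})=k^{\times}/(k^{\times})^{n-1}$ is a torsion group, hence has rank $0$ even for $k=\mathbb{Q}$; the paper's Example 3.11 instead uses the graph ${\bullet}\rightarrow{\bullet}$, where $L_{\mathbb{Q}}(E)\cong\mathbb{M}_2(\mathbb{Q})$ has $K_1\cong\mathbb{Q}^{\times}$ of infinite rank and $rank(K_0)=1$, so both sides of the equation in $(5)$ are infinite while $E$ has a sink. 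This does not affect the validity of your proof of the stated equivalences.
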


\begin{proof} $(1)\Leftrightarrow(2)\Leftrightarrow(3)\Leftrightarrow(4)$ follows from Corollary \ref{cor2}.

For $(2) \Rightarrow (5)$, suppose that there exists a finite graph $E'$ with no sinks and no sources such that $L_k(E)\cong\mathcal{CK}_k(E')$. Hence $L_k(E)$ is unital and since $rank(k^{\times})< \infty$, by \cite[Theorem 8.1]{GRTW} we have $|E'^0_{\text{sing}}|=(rank(k^{\times})+1)rank(K_0(L_k(E')))-rank(K_1(L_k(E')))$. Thus $(rank(k^{\times})+1)rank(K_0(L_k(E')))=rank(K_1(L_k(E')))$ and so \\$(rank(k^{\times})+1)rank(K_0(L_k(E)))=rank(K_1(L_k(E)))$.

For $(5) \Rightarrow (1)$, suppose that $L_k(E)$ is unital and $rank(K_1(L_k(E)))=\\(rank(k^{\times})+1)rank(K_0(L_k(E)))$. Thus $E^0$ is a finite set and by \cite[Theorem 8.1]{GRTW} we have $|E^0_{\text{sing}}|=(rank(k^{\times})+1)rank(K_0(L_k(E)))-rank(K_1(L_k(E)))$. Since $rank(k^{\times})< \infty$, $rank(K_1(L_k(E)))< \infty$ by \cite[Theorem 8.1]{GRTW}. Thus $|E^0_{\text{sing}}|=0$ and so $E$ has no singular vertices. Therefore $E$ is a finite graph with no sinks and the result follows.
\end{proof}

The following example shows that the assumption $rank(k^{\times})< \infty$ in the Theorem \ref{thm2} is necessary.

\begin{example}\label{exa}
Let $E$ be the graph $\begin{matrix}\xymatrix{{\bullet}_1\ar[r]^{\alpha}&{\bullet}_2}
\end{matrix}$ and $\mathbb{Q}$ be the field of rational numbers. $rank(\mathbb{Q}^{\times})= \infty$ and $K_1(\mathbb{Q})\cong \mathbb{Q}^{\times}$. $L_{\mathbb{Q}}(E)\cong \mathbb{M}_2(\mathbb{Q})$ and so $K_1(L_{\mathbb{Q}}(E))\cong K_1(\mathbb{Q})\cong \mathbb{Q}^{\times}$. Thus $rank(K_1(L_{\mathbb{Q}}(E)))=(rank(\mathbb{Q}^{\times})+1)rank(K_0(L_{\mathbb{Q}}(E)))=\infty$, $L_{\mathbb{Q}}(E)$ is unital and $E$ is a finite graph but $E$ has a sink.
\end{example}

%%%%%%%%%%%%%%%%%%%%%%%%%%%%%%%%%%%%%%

\section{Corners of Leavitt path algebras}

In this section we show that there exists a graph $E(T)$ for the corner $P_XL_R(E)P_X$ of a Leavitt path algebra $L_R(E)$ associated to a finite vertex set $X$, such that $P_XL_R(E)P_X\cong L_R(E(T))$.

Let $E$ be a graph. An acyclic subgraph $T$ of $E$ is called a directed forest in $E$ if for each $v\in T^0$, $|T^1\cap r_E^{-1}(v)|\leqslant 1$. We denote
by $T^r$ the subset of $T^0$ consisting of those vertices $v$ with $|T^1\cap r_E^{-1}(v)|=0$ and by $T^l$ the subset of $T^0$ consisting of those vertices $v$ with $|T^1\cap s_E^{-1}(v)|=0$.

\begin{definition}\label{def1}(\cite[Definition 3.1]{C})
Let $E$ be a graph, $X\subsetneqq E^0$ be a finite set and $T$ be a row-finite, path-finite directed forest in $E$ with $T^r=X$ and $T^0=H_E(X)$. Define the $T$-corner, $E(T)$ of $E$ as follows:

$$E(T)^0 =T^0\backslash \{v| v\in T^0, \varnothing\neq s_E^{-1}(v)\subseteq T^1\},$$
$$E(T)^1 = \{e_u| e\in s_E^{-1}(T^0)\backslash T^1, u\in E(T)^0, r_E(e)\geq_T u\} ,$$
$$s_{E(T)}(e_u)=s_E(e),  r_{E(T)}(e_u)=u.$$
\end{definition}

Let $E$ be a graph, $X\subsetneqq E^0$ be a finite set. According to \cite[Lemma 3.6]{C} there is a forest $T$ in $E$ which satisfies the conditions of \ref{def1} if and only if $H_E(X)$ is finite.

\begin{example}\label{example 3}
Let
$\begin{matrix}E:\xymatrix{{\bullet_{v_1}}\ar@/^1pc/[r]^{\gamma}&{\bullet_{v_2}}\ar@/^1pc/[r]^{\alpha}\ar@/^1pc/[l]^{\delta}&{\bullet_{v_3}}\ar@/^1pc/[l]^{\beta}}
\end{matrix},$ $X=\{v_2\}$  and

 $$\begin{matrix}T: \xymatrix{
&{\bullet_{v_1}}&{\bullet_{v_2}}\ar@/^1pc/[r]^{\alpha}\ar@/^1pc/[l]^{\delta}&{\bullet_{v_3}}}
\end{matrix}$$

$T$ is a row-finite, path-finite directed forest in $E$ with $T^r=\{v_2\}$ and $T^0=H_E(\{v_2\})$. Thus $E(T)$ is the following graph:

$$\begin{matrix}E(T): \xymatrix{{\bullet_{v_1}}\ar@(dl,ul)^{\gamma_{v_1}}\ar@/^1pc/[r]^{\gamma_{v_3}}&{\bullet_{v_3}}\ar@(dr,ur)_{\beta_{v_3}}\ar@/^1pc/[l]^{\beta_{v_1}}}
\end{matrix}$$
\end{example}

Crisp in \cite[Theorem 3.5]{C} proved that $C^*(E(T))\cong P_XC^*(E)P_X$, where $\sum_{v\in X}P_v$ converges strictly to a projection $P_X$ in the multiplier algebra $M(C^*(E))$. In the following theorem we prove the similar result for Leavitt path algebras.

\begin{theorem}\label{thm3} Let $E$ be a graph, $X\subsetneqq E^0$ be a finite set, $T$ be a row-finite, path-finite directed forest in $E$ with $T^r=X$ and $T^0=H_E(X)$, $P_X=\sum_{x\in X}x$ and $R$ be a commutative unital ring. Then $L_R(E(T))\cong P_XL_R(E)P_X$. If in addition $L_R(E)$ is an algebraic Cuntz-Krieger algebra, then $L_R(E(T))$ is isomorphic to an algebraic Cuntz-Krieger algebra.
\end{theorem}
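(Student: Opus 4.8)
The plan is to follow the strategy of Theorem \ref{thm1} and Proposition \ref{prop0}: exhibit an explicit Cuntz-Krieger $E(T)$-family inside the corner $P_XL_R(E)P_X$, use the Universal Homomorphism Property to obtain a $*$-homomorphism $\Psi\colon L_R(E(T))\rightarrow P_XL_R(E)P_X$, check surjectivity as in Crisp's analytic argument \cite[Theorem 3.5]{C}, and then deduce injectivity from the generalized uniqueness theorem (Theorem \ref{UniquenessThm}), which is the algebraic tool replacing the gauge-invariant uniqueness theorem used in the $C^*$-setting. The idea of the family is to relocate every generator into $X$ along the forest. Since $T^r=X$ and $T$ is a directed forest, each $v\in T^0$ has a unique path $\tau_v$ in $T$ from a root in $X$ to $v$ (with $\tau_v=v$ when $v\in X$). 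For $v\in E(T)^0$ I set
$$Q_v=\tau_v\,p_v\,\tau_v^{*},\qquad p_v=\sum_{e\in s_E^{-1}(v)\setminus T^1}ee^{*}$$
(with $p_v=v$ when $v$ is a sink), and for $e_u\in E(T)^1$, writing $\sigma_u$ for the path in $T$ from $r_E(e)$ to $u$, I set
$$T_{e_u}=\tau_{s_E(e)}\,e\,\sigma_u\,p_u\,\tau_u^{*}.$$
Since $s_E(\tau_v),s_E(\tau_u)\in X$, both $Q_v$ and $T_{e_u}$ lie in $P_XL_R(E)P_X$, and a direct CK-1 computation gives that each $Q_v$ is idempotent and $T_{e_u}^{*}T_{e_u}=Q_u$.

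The combinatorial heart is checking that $\{T_{e_u},Q_v\}$ is an $E(T)$-family, and this is exactly where the definitions of $E(T)^0$ and $E(T)^1$ are engineered. The key identity is a forest-expansion telescoping: applying the CK-2 relation in $E$ repeatedly along forest edges and stopping at the vertices of $E(T)^0$ (which terminates by path-finiteness of $T$) gives, for each $w\in T^0$,
$$w=\sum_{u\in E(T)^0,\,w\geq_T u}\sigma_{w,u}\,p_u\,\sigma_{w,u}^{*},$$
where $\sigma_{w,u}$ is the $T$-path from $w$ to $u$. Summing over $w\in X$ yields $\sum_{v\in E(T)^0}Q_v=P_X$, and the observation that a non-forest edge $e$ satisfies $e^{*}f=0$ for every forest edge $f$ shows the $Q_v$ are pairwise orthogonal. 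Taking $w=r_E(e)$ gives $\sum_u T_{e_u}T_{e_u}^{*}=\tau_{s_E(e)}\,ee^{*}\,\tau_{s_E(e)}^{*}$, and summing over the non-forest edges out of a regular vertex $v$ recovers $Q_v$; this is the CK-2 relation in $E(T)$. Surjectivity of $\Psi$ then follows as in \cite[Theorem 3.5]{C}, since every spanning monomial $\alpha\beta^{*}$ of $P_XL_R(E)P_X$ can be rewritten in terms of the $T_{e_u}$ and $Q_v$.

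For injectivity I apply Theorem \ref{UniquenessThm}. Condition (iii)(a) holds because $\Psi(rq_v)=rQ_v\neq 0$ for $r\in R\setminus\{0\}$ by \cite[Proposition 4.9]{T}, as $Q_v$ is a nonzero idempotent conjugate to $p_v$. For (iii)(b) one must show that a cycle without exits $c$ in $E(T)$ lifts to a cycle without exits $\mu$ in $E$: each edge $e_u$ corresponds in $E$ to the concatenation $e\sigma_u$ of a non-forest edge with a forest segment, and when these compose along $c$ the intermediate factors $p_{u_i}\tau_{u_i}^{*}\tau_{s(e^{(i+1)})}$ telescope away (using $p_{u_i}e^{(i+1)}=e^{(i+1)}$), leaving $\Psi(t_c)$ as a conjugate of the closed path $\mu=e^{(1)}\sigma_{u_1}\cdots e^{(m)}\sigma_{u_m}$; the absence of exits for $c$ forces $\mu$ to be a cycle without exits in $E$, whence $\langle\Psi(\omega_\alpha)\rangle\cong R[x,x^{-1}]$ for every distinguished path $\alpha$ of $E(T)$. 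I expect this cycle analysis to be the main obstacle, precisely as it was the delicate point in Theorem \ref{thm1} and Proposition \ref{prop0}. Granting it, Theorem \ref{UniquenessThm} makes $\Psi$ injective, hence an isomorphism, so $L_R(E(T))\cong P_XL_R(E)P_X$.

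Finally, for the second assertion, suppose $L_R(E)$ is an algebraic Cuntz-Krieger algebra, so $E$ is a finite graph with no sinks and no sources. Then $E(T)^0\subseteq T^0=H_E(X)$ and $E(T)^1$ are finite, so $E(T)$ is finite. To see $E(T)$ has no sinks, fix $v\in E(T)^0$; since $E$ has no sinks and $v$ does not satisfy $\varnothing\neq s_E^{-1}(v)\subseteq T^1$, the vertex $v$ emits a non-forest edge $e\notin T^1$, and following the acyclic, path-finite forest from $r_E(e)\in H_E(X)$ reaches a leaf $u\in T^l$ with $r_E(e)\geq_T u$; as $u$ emits no forest edge but $E$ has no sinks, $u\in E(T)^0$, so $e_u\in E(T)^1$ has source $v$. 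Thus $E(T)$ is a finite graph with no sinks, and Corollary \ref{cor2} shows $L_R(E(T))$ is isomorphic to an algebraic Cuntz-Krieger algebra.
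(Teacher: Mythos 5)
Your overall architecture matches the paper's: build a Cuntz--Krieger $E(T)$-family in $P_XL_R(E)P_X$ from (essentially) Crisp's formulas, obtain $\Psi$ from the universal property, quote Crisp for surjectivity, and prove the no-sinks claim by pushing a non-forest edge emitted by $v\in E(T)^0$ down the acyclic, path-finite forest until it lands in $E(T)^0$ (the paper runs this last step as a contradiction producing an infinite path in $T$, but it is the same argument). The genuinely different choice is the injectivity step. You invoke the generalized uniqueness theorem (Theorem \ref{UniquenessThm}), which forces you to understand how cycles without exits in $E(T)$ lift to $E$; the paper avoids this entirely by re-grading $L_R(E)$ with the weight $w(e)=l(\tau(r(e)))-l(\tau(s(e)))+1$ for non-forest edges with source and range in $T^0$, and $w(e)=1$ otherwise, so that $\Psi$ becomes a graded homomorphism and injectivity follows from the graded uniqueness theorem plus the easy check $\Psi(rv)\neq 0$. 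Your route can be made to work, but the sentence ``the absence of exits for $c$ forces $\mu$ to be a cycle without exits in $E$'' conceals the real content: you must exclude exits of $\mu$ at interior forest vertices (branching of $T$ below $r_E(e^{(i)})$ would reach a second vertex $u'\in E(T)^0$ with $r_E(e^{(i)})\geq_T u'$ and hence give the exit $e^{(i)}_{u'}$ of $c$ --- that is the argument, but it has to be said), exclude extra forest and non-forest edges at the $u_i$ themselves (here one needs that $T^0=H_E(X)$ is hereditary, so every non-forest edge out of $u_i$ really does produce an edge of $E(T)$), and rule out $\mu$ being a proper power of a cycle. The paper's grading trick buys freedom from all of this bookkeeping.

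One concrete defect in your setup: $p_v=\sum_{e\in s_E^{-1}(v)\setminus T^1}ee^{*}$ is an infinite sum, hence not an element of $L_R(E)$, whenever $v\in E(T)^0$ is an infinite emitter; the theorem is stated for an arbitrary graph $E$ (only $T$ is assumed row-finite), so this case occurs. The paper's $Q_v=\tau(v)\tau(v)^*-\sum_{e\in T^1\cap s^{-1}(v)}\tau(v)ee^*\tau(v)^*$ uses the complementary \emph{finite} sum and agrees with yours at regular vertices and at sinks by CK-2. The same issue affects your telescoping identity $w=\sum_{u}\sigma_{w,u}\,p_u\,\sigma_{w,u}^{*}$ and the relation $\sum_{v}Q_v=P_X$: with the subtraction form these follow from a pure forest telescoping (every non-root of $T$ has a unique incoming forest edge) without ever invoking CK-2 at vertices of $E(T)^0$. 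For the second assertion of the theorem, where $E$ is finite, none of this matters and your argument is fine.
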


\begin{proof} Let $\{e, v|e\in E^1, v\in E^0\}$ be the universal Cuntz-Krieger $E$-family. By \cite[Lemma 2.1 (i)]{C}, for any $v\in T^0$ there exists a unique path $\tau(v)$ in $\text{Path}(T)$ such that $s_T(\tau(v))\in T^r$ and $r_T(\tau(v))=v$. For each $v\in T^0$, let $Q_v=\tau(v)\tau(v)^*-\sum_{e\in T^1\cap s^{-1}(v)}\tau(v)ee^*\tau(v)^*$. For each $e_u\in E(T)^1$, let $T_{e_u}=\tau(s(e))e\tau(r(e))^*Q_u$. The similar argument as in the proof of the \cite[Proposition 3.8]{C} shows that $\{T_{e_u}, Q_v|e_u\in E(T)^1, v\in E(T)^0\}$ is a Cuntz-Krieger $E(T)$-family in $E$. By the universal homomorphism property of $L_R(E(T))$ there exists a $*$-homomorphism $\Psi:L_R(E(T))\rightarrow L_R(E)$ with $\Psi(v)=Q_v$ for each $v\in E(T)^0$ and $\Psi(e_u)=T_{e_u}$ for each $e_u\in E(T)^1$. Let $w:E^1\rightarrow \mathbb{Z}$ be a weight map given by
\begin{align*}
w(e) & = \begin{cases}
l(\tau(r(e)))-l(\tau(s(e)))+1 & \text{if }\ e\not\in T^1; s(e), r(e)\in T^0,\\
1 & \text{}\ otherwise.  \end{cases}
\end{align*}

Let $w(e^*)=-w(e)$ and $w(v)=0$ for each $e\in E^1$ and $v\in E^0$. Thus $L_R(E)$ is a $\mathbb{Z}$-graded algebra. Also $L_R(E(T))$ is a $\mathbb{Z}$-graded algebra with the usual $\mathbb{Z}$-grading. We show that $\Psi$ is a $\mathbb{Z}$-graded ring homomorphism. For each $v\in E(T)^0$, $\Psi(v)=Q_v=\tau(v)\tau(v)^*-\sum_{e\in T^1\cap s^{-1}(v)}\tau(v)ee^*\tau(v)^*$ and so the degree of $\Psi(v)$ is zero. For each $e_u\in E(T)^1$, $\Psi(e_u)=T_{e_u}=\tau(s(e))e\tau(r(e))^*Q_u$. $\tau(s(e)), \tau(r(e))\in \text{Path}(T)$ and so $\tau(s(e))e\tau(r(e))^*Q_u$ is homogeneous of degree $l(\tau(s(e)))+l(\tau(r(e)))-l(\tau(s(e)))+1-l(\tau(r(e)))=1$. Thus $\Psi$ is a $\mathbb{Z}$-graded ring homomorphism.

Now we show that for each $v\in E(T)^0$ and each $r\in R\backslash\{0\}$, $\Psi(rv)\neq 0$. Let $v\in E(T)^0$ and $r\in R\backslash\{0\}$. $\Psi(rv)=rQ_v=r(\tau(v)\tau(v)^*-\sum_{e\in T^1\cap s^{-1}(v)}\tau(v)ee^*\tau(v)^*)$. By Definition \ref{def1}, $v$ is either sink in $E$ or $v$ emits an edge $f\in E^1\backslash T^1$. If $v$ is a sink in $E$, then $Q_v=\tau(v)\tau(v)^*$ and by \cite[Proposition 4.9]{T} for each $r\in R\backslash\{0\}$, $rQ_v\neq 0$. Now assume that $v$ emits an edge $f\in E^1\backslash T^1$. If $rQ_v=0$ for some $r\in R\backslash\{0\}$, then $r\tau(v)\tau(v)^*-r\sum_{e\in T^1\cap s^{-1}(v)}\tau(v)ee^*\tau(v)^*=0$. Thus $(r\tau(v)\tau(v)^*-r\sum_{e\in T^1\cap s^{-1}(v)}\tau(v)ee^*\tau(v)^*)\tau(v)ff^*\tau(v)^*=0$. Since $e\in T^1$ and $f\not\in T^1$, $e^*f=0$. Hence $r\tau(v)ff^*\tau(v)^*=0$ and so $r\tau(v)f=0$. This leads to a contradiction with the Proposition 4.9 of \cite{T}. Thus $\Psi(rv)\neq 0$ for each $v\in E(T)^0$ and each $r\in R\backslash\{0\}$ and by the graded uniqueness theorem, $\Psi$ is injective.  The similar argument as in the proof of the \cite[Proposition 3.11]{C} shows that $\Psi(L_R(E(T)))=P_XL_R(E)P_X$ and so $L_R(E(T))\cong P_XL_R(E)P_X$.

Now suppose that $L_R(E)$ is an algebraic Cuntz-Krieger algebra. Thus $E$ is a finite graph with no sinks and no sources. Since $E$ is a finite graph, any directed forest in $E$ is finite and so $E(T)$ is a finite graph. Assume, on the contrary, that $v\in E(T)^0$ and $s_{E(T)}^{-1}(v)=\emptyset$. Since $E$ has no sinks, $s_E^{-1}(v)\neq \emptyset$. Thus by the definition of $E(T)$, $s_E^{-1}(v)\not\subseteq T^1$. Therefore there exists $e\in E^1\backslash T^1$ such that $s_E(e)=v$. If $r_E(e)\in E(T)^0$, then $e_{r_E(e)}\in E(T)^1$ and $s_{E(T)}(e_{r_E(e)})=v$, which is a contradiction. Thus $r_E(e)\not\in E(T)^0$. Since $T^0=H_E(X)$ and $v\in T^0$, $r_E(e)\in T^0$. Also $s_E^{-1}(r_E(e))\neq \emptyset$, then there exists $e_1\in T^1$ with $s_E(e_1)=r_E(e)$. Let $r_E(e_1)=v_1$. If $v_1\in E(T)^0$, then $s_{E(T)}(e_{v_1})=v$, which is a contradiction. Thus $v_1\in T^0\backslash E(T)^0$. Similar argument shows that there exists $e_2\in T^1$ with $s_E(e_2)=v_1$. Since $T$ is an acyclic graph, by continuing in this way we get an infinite path $e_1e_2e_3\cdots$ in $T$ which is a contradiction. Thus $E(T)$ is a finite graph with no sinks and by Corollary \ref{cor2}, $L_R(E(T))$ is isomorphic to an algebraic Cuntz-Krieger algebra.
\end{proof}

Let $E$ be a finite graph with no sinks and no sources. In the proof of the above Theorem we show that $E(T)$ is a finite graph with no sinks. The following example shows that there exists a finite graph $E$ with no sinks and no sources such that $E(T)$ has a source.

\begin{example}
Let $E$ be the graph
$$\xymatrix{
&&{\bullet^{3}}\ar[dr]^{\beta}&\\
{\bullet_{1}}\ar@(dl,ul)\ar[r]&{\bullet_{2}}\ar[ur]^{\alpha}\ar[rr]^{\delta}&&{\bullet_{4}}\ar@(dr,ur)_{\gamma}}\\
$$, $X=\{2\}$ and $T$ be the directed forest
$$\xymatrix{
&&{\bullet^{3}}\ar[dr]^{\beta}&\\
&{\bullet_{2}}\ar[ur]^{\alpha}&&{\bullet_{4}}}\\
$$
Thus $E(T)$ is the following graph:
$$\xymatrix{{\bullet_{2}}\ar[rr]^{\delta_{4}}&&{\bullet_{4}}\ar@(dr,ur)_{\gamma_{4}}}
$$
\end{example}

\begin{definition}(\cite[Definitions 9.1 and 9.4]{AT})
Let $E$ be a graph.
\begin{itemize}
\item[(1)]
Define $M_nE$ to be the graph formed from $E$ by taking
each $v\in E^0$ and attaching a head of length $n-1$ of the form
$$\begin{matrix}
\xymatrix{{\bullet}_{v_{n-1}} \ar[r]^{e_{n-1}^v} & {\bullet}_{v_{n-2}} \ar[r]^{e_{n-2}^v}  & {\bullet}_{v_{n-3}} \ar @{.>}[r]&{\bullet}_{v_2} \ar [r]^{e_2^v} &{\bullet}_{v_1} \ar [r]^{e_1^v}  & {\bullet}_{v}}
\end{matrix}$$
to $E$.
\item[(2)]
Define $SE$ to be the graph formed from $E$ by taking
each $v\in E^0$ and attaching an infinite head of the form
$$\begin{matrix}
\xymatrix{ {\cdots} \ar[r] & {\bullet}_{v_{3}} \ar[r]^{e_{3}^v}&{\bullet}_{v_2} \ar [r]^{e_2^v} &{\bullet}_{v_1} \ar [r]^{e_1^v}  & {\bullet}_{v}}
\end{matrix}$$
to $E$. $SE$ is called the stabilization of $E$.
\end{itemize}
\end{definition}

Let $R$ be a ring. The ring of finitely supported, countably infinite square matrices with coefficients in $R$ is denoted by $\mathbb{M}_\infty(R)$ \cite[Definition 9.6]{AT}. Note that if $R$ is an algebra (resp. a $*$-algebra), then $\mathbb{M}_\infty(R)$ inherits an algebra (resp. a $*$-algebra) structure. Algebras $A$ and $B$ are called stably isomorphic if $\mathbb{M}_\infty(A)\cong \mathbb{M}_\infty(B)$. Abrams and Tomforde in \cite[Propositions 9.3 and 9.8]{AT} proved that, for any graph $E$ and field $k$, $L_k(M_nE)\cong \mathbb{M}_n(L_k(E))$ and $L_k(SE)\cong \mathbb{M}_\infty(L_k(E))$. A similar argument as in the proof of \cite[Propositions 9.3 and 9.8]{AT} with a commutative unital ring $R$ in place of field $k$, shows that $L_R(M_nE)\cong \mathbb{M}_n(L_R(E))$ and $L_R(SE)\cong \mathbb{M}_\infty(L_R(E))$.

\begin{corollary}\label{cor3} Let $E$ be a graph and $R$ be a commutative unital ring.
\begin{itemize}
\item[(1)] Let $X$ be a finite subset of $(SE)^0$ such that $H_{SE}(X)$ is a finite set and $e_X=\sum_{x\in X}x$. Then there exists a row-finite, path-finite directed forest $T$ in $SE$ such that $L_R(SE(T))\cong e_XL_R(SE)e_X$. If in addition $L_R(SE)$ is an algebraic Cuntz-Krieger algebra, then $e_XL_R(SE)e_X$ is isomorphic to an algebraic Cuntz-Krieger algebra.
\item[(2)] Let $n$ be a positive integer, $X$ be a finite subset of $(M_nE)^0$ such that $H_{M_nE}(X)$ is a finite set and $e_X=\sum_{x\in X}x$. Then there exists a row-finite, path-finite directed forest $T$ in $M_nE$ such that $L_R(M_nE(T))\cong e_XL_R(M_nE)e_X$. If in addition $L_R(M_nE)$ is an algebraic Cuntz-Krieger algebra, then $e_XL_R(M_nE)e_X$ is isomorphic to an algebraic Cuntz-Krieger algebra.
\end{itemize}
\end{corollary}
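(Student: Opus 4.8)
The plan is to reduce Corollary \ref{cor3} directly to Theorem \ref{thm3}, using the stabilization/matrix identifications recorded just before the statement. The essential observation is that $e_X L_R(SE) e_X$ is literally a corner of the Leavitt path algebra $L_R(SE)$ cut down by the finite vertex set $X \subseteq (SE)^0$, and likewise $e_X L_R(M_nE) e_X$ is a corner of $L_R(M_nE)$. So the first move is to verify that the hypotheses of Theorem \ref{thm3} are met for the graph $SE$ (respectively $M_nE$) together with the finite set $X$. We are given that $H_{SE}(X)$ is finite, and by the remark following Definition \ref{def1} (citing \cite[Lemma 3.6]{C}) this finiteness is exactly the condition guaranteeing the existence of a row-finite, path-finite directed forest $T$ in $SE$ with $T^r = X$ and $T^0 = H_{SE}(X)$. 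We must also check that $X \subsetneqq (SE)^0$; this holds because $SE$ has infinitely many vertices (each $v \in E^0$ carries an infinite head) while $X$ is finite, so the proper-subset requirement is automatic. For $M_nE$ the same reasoning applies with $X \subsetneqq (M_nE)^0$, which holds as long as $X$ omits at least one vertex; since $M_nE$ has strictly more vertices than $X$ whenever $E$ is nonempty, and $H_{M_nE}(X)$ is assumed finite, a suitable $T$ again exists.

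Once the forest $T$ is produced, Theorem \ref{thm3} applied to the graph $SE$ (resp.\ $M_nE$) and the finite set $X$ yields immediately
$$L_R((SE)(T)) \cong e_X L_R(SE) e_X,$$
and in our notation $(SE)(T)$ is the $T$-corner graph, which I will continue to write $E(T)$ following the statement's convention. This establishes the first assertion of each part. For the second assertion, suppose $L_R(E)$ is an algebraic Cuntz-Krieger algebra, so $E$ is a finite graph with no sinks and no sources. The key point is that this property is inherited by $SE$ and $M_nE$ in the relevant sense: attaching heads (finite or infinite) to each vertex of $E$ never creates a sink, since every newly added vertex emits exactly one edge along its head toward $E$, and the original vertices of $E$ retain all their outgoing edges. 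Thus $SE$ and $M_nE$ are graphs with no sinks. However, $L_R(E)$ being an algebraic Cuntz-Krieger algebra is a statement about $L_R(E)$ itself, not about $L_R(SE)$ or $L_R(M_nE)$; what Theorem \ref{thm3}'s second clause requires is that the ambient Leavitt path algebra be an algebraic Cuntz-Krieger algebra.

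This mismatch is the main obstacle, and here is how I would resolve it. The second clause of Theorem \ref{thm3} concludes that if $L_R(SE)$ is an algebraic Cuntz-Krieger algebra then so is the corner $L_R(E(T))$; but $SE$ has infinitely many vertices, so $L_R(SE) \cong \mathbb{M}_\infty(L_R(E))$ is not itself an algebraic Cuntz-Krieger algebra (those are defined only for finite graphs with no sinks and no sources). So I cannot invoke the second clause verbatim with $SE$. Instead I would extract from the proof of Theorem \ref{thm3} the genuinely needed conclusion: once $L_R(E(T)) \cong e_X L_R(SE) e_X$ is established via the graded uniqueness argument, it suffices to show that the corner graph $E(T)$ is a finite graph with no sinks, whence Corollary \ref{cor2} gives that $L_R(E(T))$ is isomorphic to an algebraic Cuntz-Krieger algebra. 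The finiteness of $E(T)$ follows because $T^0 = H_{SE}(X)$ is finite by hypothesis, so $E(T)^0 \subseteq T^0$ is finite and $E(T)^1$ is correspondingly finite. That $E(T)$ has no sinks follows from the no-sinks argument given inside the proof of Theorem \ref{thm3}: that argument used only that the ambient graph has no sinks, and $SE$ (resp.\ $M_nE$) has no sinks precisely because $E$ has none. Therefore the cleanest writeup applies Theorem \ref{thm3} for the first isomorphism and then repeats (or cites) the no-sinks/finiteness verification together with Corollary \ref{cor2} for the algebraic Cuntz-Krieger conclusion, handling $SE$ in part (1) and $M_nE$ in part (2) by the same template.
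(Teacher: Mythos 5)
Your proposal is correct and follows essentially the same route as the paper: produce the forest $T$ from \cite[Lemma 3.6]{C} using the finiteness of $H_{SE}(X)$ (resp.\ $H_{M_nE}(X)$) and then invoke Theorem \ref{thm3}. Your extra care with the second assertion is warranted --- the paper simply writes ``the result follows from Theorem \ref{thm3}'' even though $L_R(SE)$ is not itself an algebraic Cuntz--Krieger algebra, and your patch (checking directly that $E(T)$ is finite with no sinks, using that $T^0=H_{SE}(X)$ is finite and $SE$, $M_nE$ inherit sinklessness from $E$, then citing Corollary \ref{cor2}) is exactly the argument embedded in the proof of Theorem \ref{thm3} and is what the paper implicitly relies on.
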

\begin{proof} Since $H_{SE}(X)\backslash X$ (resp. $H_{M_nE}(X)\backslash X$) is a finite set, by \cite[Lemma 3.6]{C} there is a row-finite, path-finite directed forest $T$ in $SE$ (resp. $M_nE$) which satisfies the conditions of Theorem \ref{thm3}. Thus the result follows from Theorem \ref{thm3}.
\end{proof}

An idempotent $e$ of an algebra $A$ is called full idempotent if $AeA=A$.

\begin{remark} In Corollary \ref{cor3} if in addition we assume that $E$ is a graph with finitely many vertices and $E^0\subseteq X$, then the smallest saturated subset of $(SE)^0$ (resp. $(M_nE)^0$) containing $X$ is $(SE)^0$ (resp. $(M_nE)^0$) and so $e_X$ is a full idempotent.
\end{remark}

\begin{proposition}\label{cor4} Let $R$ be a commutative unital ring and $A$ be an algebraic Cuntz-Krieger $R$-algebra. Then $\mathbb{M}_n(A)$ is isomorphic to an algebraic Cuntz-Krieger algebra for each positive integer $n$.
\end{proposition}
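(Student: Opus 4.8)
The plan is to reduce $\mathbb{M}_n(A)$ to the Leavitt path algebra of an explicit finite graph with no sinks, and then invoke Corollary \ref{cor2}. Since $A$ is an algebraic Cuntz-Krieger $R$-algebra, by Definition \ref{CKa} we have $A=L_R(E)$ for some finite graph $E$ with no sinks and no sources. The key input is the matrix-stabilization isomorphism $L_R(M_nE)\cong \mathbb{M}_n(L_R(E))$ recorded just before the statement, which immediately gives $\mathbb{M}_n(A)\cong L_R(M_nE)$.

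Next I would analyze the graph $M_nE$. It is obtained from $E$ by attaching to each vertex $v\in E^0$ a finite head $v_{n-1}\to v_{n-2}\to\cdots\to v_1\to v$, so since $E$ is finite, $M_nE$ is again finite. I would then verify that $M_nE$ has no sinks: every original vertex $v\in E^0$ emits an edge in $E$ (because $E$ has no sinks) and hence still emits that edge in $M_nE$, while each added vertex $v_i$ emits the edge $e_i^v$ of its head. Note that $M_nE$ does acquire new sources, namely the terminal vertices $v_{n-1}$ of the attached heads; but the presence of sources is irrelevant for what follows.

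Finally, applying the equivalence $(1)\Leftrightarrow(2)$ of Corollary \ref{cor2} to the finite graph $M_nE$ with no sinks shows that $L_R(M_nE)$ is isomorphic to an algebraic Cuntz-Krieger algebra, and therefore so is $\mathbb{M}_n(A)$.

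There is no substantial obstacle in this argument; the only point requiring care is the verification that $M_nE$ has no sinks, which is immediate from the no-sinks hypothesis on $E$ together with the structure of the attached heads. The whole proof rests on two already-established facts: the isomorphism $L_R(M_nE)\cong \mathbb{M}_n(L_R(E))$ and the characterization of algebraic Cuntz-Krieger algebras in Corollary \ref{cor2}.
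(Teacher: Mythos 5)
Your proposal is correct and follows essentially the same route as the paper: identify $\mathbb{M}_n(A)$ with $L_R(M_nE)$, observe that $M_nE$ is a finite graph with no sinks, and apply Corollary \ref{cor2}. The only difference is that you spell out the (easy) verification that $M_nE$ has no sinks, which the paper leaves implicit.
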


\begin{proof} Let $E$ be a finite graph with no sinks and no sources such that $A=L_R(E)$ and $n$ be a positive integer. $M_nE$ is a finite graph with no sinks and so by Corollary \ref{cor2}, $L_R(M_nE)$ is isomorphic to an algebraic Cuntz-Krieger algebra. Thus $\mathbb{M}_n(L_R(E))\cong L_R(M_nE)$ is isomorphic to an algebraic Cuntz-Krieger algebra.
\end{proof}

%%%%%%%%%%%%%%%%%%%%%%%%%%%%%%%%%%%%%%
\section{Algebras that are Morita equivalent to algebraic Cuntz-Krieger algebras}

In this section we show that if a unital algebra $A$ is stably isomorphic to an algebraic Cuntz-Krieger algebra, then $A$ is isomorphic to an algebraic Cuntz-Krieger algebra. Also we show that if $A$ is Morita equivalent to an algebraic Cuntz-Krieger algebra, then $A$ is isomorphic to an algebraic Cuntz-Krieger algebra.

\begin{definition} Let $R$ be a commutative unital ring, $A$ be an $R$-algebra, $e^2=e\in \mathbb{M}_n(A)$ and $f^2=f\in \mathbb{M}_m(A)$. $e$ is called Murray-von Neumann equivalent to $f$, denoted $e\sim f$, if there exist $x\in \mathbb{M}_{m,n}(A)$ and $y\in \mathbb{M}_{n,m}(A)$ such that $e=yx$ and $f=xy$.
\end{definition}

\begin{example} Let $R$ be a commutative unital ring and $E$ be a graph. Let $v\in E^0$ be a regular vertex. Thus by the Cuntz-Krieger relations we have $v\sim\sum_{e\in s^{-1}_E(v)}r_E(v)$.
\end{example}

For an idempotent $e\in A$ and a positive integer $n$, $ne$ denotes the idempotent $M\in \mathbb{M}_n(A)$ that $M=(m_{ij})$, $m_{ii}=e$ for each $i$ and $m_{ij}=0$ for each $i\neq j$.

The proof of the following lemma is similar to the proof of \cite[Lemma 4.6]{AR}, and we give the proof for the reader's convenience.

\begin{lemma}\label{lemm1} Let $k$ be a field, $E$ be a finite graph with no sinks and no sources such that every vertex of $E$ is a base point of at least one cycle of length one, $\{v, e| v\in E^0, e\in E^1\}$ be a Cuntz-Krieger $E$-family and $f$ be a full idempotent of $\mathbb{M}_n(L_k(E))$. Then there exists a set $\{m_v|v\in E^0, m_v\geq1\}$ of positive integers such that $f\sim \sum_{v\in E^0}m_vv$.
\end{lemma}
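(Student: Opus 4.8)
The plan is to pass to the monoid $V(L_R(E))$ of Murray--von Neumann equivalence classes of idempotents and run the whole argument there, paralleling the $C^*$-algebraic Lemma 4.6 of \cite{AR} with $V$ playing the role of $K_0$. Since Murray--von Neumann equivalence is stable, $V(\mathbb{M}_n(L_R(E)))=V(L_R(E))$, so $[f]$ lives in $V(L_R(E))$. For the finite graph $E$ with no sinks this monoid is generated by the vertex classes $\{[v]:v\in E^0\}$ subject to the $CK$-$2$ relations $[v]=\sum_{s_E(e)=v}[r_E(e)]$; in particular $f$ is equivalent to a diagonal idempotent $\sum_{v\in E^0}a_v v$ with integers $a_v\ge 0$. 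The lemma then reduces to showing that one may enlarge the coefficients so that each is $\ge 1$. Since $f\sim\sum_v a_v v$ already holds, it suffices to prove $[f]=[f]+[w]$ in $V(L_R(E))$ for every $w\in E^0$: then $[f]=[f]+\sum_{w}[w]=\bigl[\sum_v(a_v+1)v\bigr]$, and we may set $m_v=a_v+1$.

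Next I would extract the consequence of the loop hypothesis. If $\ell_v$ is a loop at $v$ then $\ell_v^{\ast}\ell_v=r_E(\ell_v)=v$ while $\ell_v\ell_v^{\ast}$ is an idempotent with $\ell_v\ell_v^{\ast}\le v$ and $\ell_v\ell_v^{\ast}\sim v$; hence the orthogonal decomposition $v=\ell_v\ell_v^{\ast}+(v-\ell_v\ell_v^{\ast})$ gives $[v]=[v]+[\,v-\ell_v\ell_v^{\ast}\,]$. When the loop has an exit (that is, $v$ emits an edge other than $\ell_v$) this exhibits $[v]$ as a properly infinite element, and then the refinement and comparison properties of the graph monoid yield the absorption rule $[u]=[u]+[w]$ whenever $u$ is such a vertex and $u\ge w$ (there is a path from $u$ to $w$). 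This is the engine that lets me add vertices without changing $[f]$.

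I would then feed in fullness. Because $f$ is full, the order ideal of $V(L_R(E))$ generated by $[f]$ is everything; equivalently, by the lattice isomorphism between saturated hereditary subsets and graded basic ideals (Theorem 7.9 of \cite{T}), the saturated hereditary closure of the support $\{v:a_v\ge 1\}$ is all of $E^0$. Using that $E$ has no sources together with this reachability, each $w\in E^0$ is reached by a path from some support vertex $u$ whose loop has an exit, so the absorption rule gives $[f]=[f]+[w]$; iterating over all $w$ produces the representative $\sum_v m_v v$ with $m_v=a_v+1\ge 1$, and hence $f\sim\sum_{v\in E^0}m_v v$. The delicate case is a vertex whose only outgoing edge is its loop: there $[v]$ fails to be properly infinite and its singleton is saturated hereditary, so I would argue separately that fullness forces such a vertex already to lie in the support (its class must survive in the corresponding simple subquotient), whence its coefficient is automatically $\ge 1$.

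The main obstacle I anticipate is precisely this propagation step: combining proper infiniteness coming only from loops-with-exits with fullness so as to cover \emph{every} vertex, while handling the exit-free loops by hand. Since the graph monoid is not cancellative, the absorption equalities $[u]=[u]+[w]$ must be justified through the refinement and comparison structure of the monoid rather than by naive subtraction. I would also need to verify that the reduction in the first paragraph---generation of $V(L_R(E))$ by the vertex classes and the resulting diagonalizability of $f$---remains valid over an arbitrary commutative unital ring $R$ and not only over a field.
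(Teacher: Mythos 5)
Your overall strategy coincides with the paper's: diagonalize $[f]$ in the graph monoid $V(L_R(E))$ as $\sum_v a_v[v]$ (the paper cites \cite[Theorem 3.5]{AMP} for this, also over a general commutative unital ring), use fullness together with Tomforde's lattice isomorphism to see that the hereditary --- and, because of the loops, automatically saturated --- closure of the support $S_0=\{v:a_v\ge 1\}$ is all of $E^0$, and then use the loops to absorb the missing vertices. The paper's engine for the last step is more concrete than your appeal to proper infiniteness and refinement: following \cite[Lemma 4.3]{AR}, one applies the CK-2 relation successively along a path from a support vertex $v$ to a missing vertex $w$; since every vertex on the path carries a loop, this gives $[v]=[w]+\sum_u m_u(v,w)[u]$ with $m_v(v,w)\ge 1$, and substituting this for one occurrence of $[v]$ raises the coefficient of $w$ while lowering no other. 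Note this only yields $[f]=[f]+[w]+(\text{nonnegative extra})$, which suffices for the lemma; your exact identity $[f]=[f]+[w]$ is stronger than needed and harder to extract from a non-cancellative monoid.

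The one step of yours that is actually false is the ``delicate case.'' Fullness does \emph{not} force a vertex whose only outgoing edge is its loop to lie in the support: take $E$ with two vertices $u,v$, a loop at each, and one further edge from $u$ to $v$; then $e_{11}\otimes u$ is a full idempotent (the hereditary saturated closure of $\{u\}$ is $E^0$), yet $v$ has coefficient $0$ and an exit-free loop. Fortunately the case needs no separate treatment at all: such a $v$ only ever arises as the \emph{target} of an absorption, and the source $u\in S_0$ of the connecting path emits a non-loop edge, so $u$'s loop has an exit and your absorption rule applies verbatim; the fix is to delete that paragraph, not to repair it. Two smaller points: the passage from ``the saturated hereditary closure of $S_0$ is $E^0$'' to ``every vertex is reachable from $S_0$'' rests on the loop hypothesis (saturation adds nothing because $u\in r_E(s_E^{-1}(u))$ for every $u$), not on the absence of sources; and your stated absorption rule $[u]=[u]+[w]$ for all $w$ with $u\ge w$ implicitly uses that every intermediate vertex of the path also has a loop with an exit, which again is supplied by the standing hypothesis rather than by refinement properties alone.
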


\begin{proof} By Theorem 3.5 of \cite{AMP}, there exists a set $\{n_v|v\in E^0, n_v\geq0\}$ of non-negative integers such that $f\sim \sum_{v\in E^0}n_vv$. Let $H_0$ be the smallest hereditary subset of $E^0$ which contains $S_0=\{v|v\in E^0, n_v\neq0\}$. By Lemma 4.5 of \cite{AR}, $H_0$ is saturated. Put $g=\sum_{v\in S_0}v\in I_{H_0}$. Since $f\sim \sum_{v\in E^0}n_vv$, the ideal generated by $f$ is equal to the ideal generated by $e_{11}\otimes g$, where $\{e_{ij}\}_{i, j}$ is a system of matrix units. Thus $e_{11}\otimes g$ is a full idempotent in $\mathbb{M}_n(L_k(E))$ and so $g$ is a full idempotent of $L_k(E)$. Thus $I_{H_0}=L_k(E)$ and hence $H_0=E^0$. Thus for each $w\in E^0$, there exists $v\in S_0$ such that $v\geq w$. Put $E^0\backslash S_0=\{w_0, w_1,\cdots, w_m\}$. Let $v\in S_0$ such that $v\geq w_0$. The similar argument as in the proof of \cite[Lemma 4.3]{AR} shows that $v\sim w_0+\sum_{u\in E^0} m_u(v, w_0)u$, where $m_u(v, w_0)\geq 0$ and $m_v(v, w_0)\geq |\{e\in E^1|s_E(e)r_E(e)=v\}|\geq 1$. Hence by using such equations for all $w_0, \cdots, w_m$ we achieve $f\sim \sum_{v\in E^0} n'_vv$, where $n'_v\geq 1$ for all $v\in E^0$.
\end{proof}

\begin{proposition}\label{prop1} Let $k$ be a field, $E$ be a finite graph with no sinks and no sources, $n$ be a positive integer and $e$ be a full idempotent of $\mathbb{M}_n(L_k(E))$. Then there exists a finite graph $F$ that has no sinks and no sources such that $L_k(F)\cong e\mathbb{M}_n(L_k(E))e$.
\end{proposition}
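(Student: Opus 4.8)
The plan is to reduce the statement to a situation in which Lemma~\ref{lemm1} applies---a graph in which every vertex is the base of a loop---and then to convert the resulting decomposition of the idempotent into a genuine corner by a vertex projection, which is exactly what Theorem~\ref{thm3} controls. First I would record that $B:=e\mathbb{M}_n(L_k(E))e$ is a full corner of $\mathbb{M}_n(L_k(E))$, hence a unital $k$-algebra Morita equivalent to $\mathbb{M}_n(L_k(E))$, and therefore to $L_k(E)$ itself. The key reduction---and the step I expect to be the main obstacle---is to produce a \emph{finite graph $E'$ with no sinks and no sources in which every vertex carries a loop}, whose Leavitt path algebra is Morita equivalent to $L_k(E)$. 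One cannot hope for an honest isomorphism $L_k(E)\cong L_k(E')$ with $E'$ of this form (for instance for $\mathbb{M}_2(k[x,x^{-1}])$, since a stably finite Leavitt path algebra with a loop at every vertex must be a finite product of copies of $k[x,x^{-1}]$), so it is essential to work only up to Morita equivalence. Here I would use the Morita invariance of source elimination (\cite[Proposition 3.1]{ALPS}, \cite[Proposition 13]{H1}) together with the graph moves preserving the Morita class to arrange a loop at every vertex, bringing $L_k(E)$ into the normal form demanded by Lemma~\ref{lemm1}. Once $E'$ is in hand, Morita theory for unital rings yields a positive integer $N$ and a \emph{full} idempotent $f\in\mathbb{M}_N(L_k(E'))$ with $B\cong f\mathbb{M}_N(L_k(E'))f$, where $f$ is full because $B$ is Morita equivalent to all of $L_k(E')$, not merely a proper corner.

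Next I would apply Lemma~\ref{lemm1} to $f$. Since $E'$ is finite, has no sinks and no sources, and every vertex is a base of a loop, the lemma provides positive integers $\{m_v:v\in E'^0\}$, all $\geq 1$, with $f\sim\sum_{v\in E'^0}m_vv$. As Murray--von Neumann equivalent idempotents have isomorphic corners, setting $M=\sum_{v\in E'^0}m_v$ we obtain
\[
B\cong f\mathbb{M}_N(L_k(E'))f\cong\Big(\sum_{v\in E'^0}m_vv\Big)\,\mathbb{M}_M(L_k(E'))\,\Big(\sum_{v\in E'^0}m_vv\Big).
\]
Now I would invoke the isomorphism $\mathbb{M}_M(L_k(E'))\cong L_k(M_ME')$ of \cite[Proposition 9.3]{AT}. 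Under the identification sending the diagonal matrix unit at a vertex to the corresponding vertex of the attached head, the diagonal idempotent $\sum_v m_vv$ becomes the sum $P_X=\sum_{x\in X}x$ of the $m_v$ lowest vertices of the head over each $v$ (including $v$ itself). Thus $X$ is a set of $M$ distinct vertices of $M_ME'$ and $B\cong P_XL_k(M_ME')P_X$, which is precisely a vertex corner of the type treated by Theorem~\ref{thm3}.

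Finally I would conclude with Theorem~\ref{thm3} and Corollary~\ref{cor2}. The graph $M_ME'$ is finite and has no sinks, since the attached heads point inward and $E'$ has no sinks (it does acquire sources when $M>1$, but this will not matter). If $\lvert E'^0\rvert=1$ then $X$ is all of $(M_ME')^0$ and $B\cong L_k(M_ME')$, the Leavitt path algebra of a finite graph with no sinks, hence isomorphic to an algebraic Cuntz--Krieger algebra by Corollary~\ref{cor2}. If $\lvert E'^0\rvert\geq 2$ then $X\subsetneq(M_ME')^0$, and choosing a row-finite, path-finite directed forest $T$ with $T^r=X$ and $T^0=H_{M_ME'}(X)$ (which exists because the graph is finite), Theorem~\ref{thm3} gives $B\cong P_XL_k(M_ME')P_X\cong L_k((M_ME')(T))$. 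The point to emphasize is that the no-sink part of the argument in the proof of Theorem~\ref{thm3} uses only that the ambient graph has no sinks, together with $T^0=H(X)$ and the acyclicity of $T$, and never uses the absence of sources. Hence $(M_ME')(T)$ is again a finite graph with no sinks, so by Corollary~\ref{cor2} its Leavitt path algebra is isomorphic to an algebraic Cuntz--Krieger algebra $L_k(F)$ with $F$ finite and having no sinks and no sources; that is, $B=e\mathbb{M}_n(L_k(E))e\cong L_k(F)$, as required.

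In summary, the genuinely nonroutine input is the first reduction: manufacturing, up to Morita equivalence, a finite graph with a loop at every vertex and carrying the full idempotent into its matrix amplification. After that, the proof is the assembly of Lemma~\ref{lemm1}, the matrix-amplification isomorphism of \cite{AT}, and the vertex-corner Theorem~\ref{thm3}, the last step hinging on the observation that only the no-sink hypothesis---not the absence of sources---is needed to see that $(M_ME')(T)$ has no sinks and hence falls under Corollary~\ref{cor2}.
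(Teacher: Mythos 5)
Your architecture is essentially the paper's: reduce to a graph in which every vertex is the base of a loop, apply Lemma~\ref{lemm1} to present the full idempotent as a sum of vertex projections with all multiplicities $\geq 1$, transport this through $\mathbb{M}_\bullet(L_k(E'))\cong L_k(M_\bullet E')$, and finish with the vertex-corner result (Theorem~\ref{thm3}/Corollary~\ref{cor3}) and Corollary~\ref{cor2}. The genuine gap is in the step you yourself single out as the crux: producing, up to Morita equivalence, a finite graph with no sinks, no sources and a loop at every vertex. The tools you cite do not accomplish this. Source elimination (\cite[Proposition 3.1]{ALPS}, \cite[Proposition 13]{H1}) only deletes sources and cannot create loops, and the unspecified ``graph moves preserving the Morita class'' are not enough as stated: contracting a cycle to a loop is only a legal move when the intermediate vertices have no other entrances or exits, which fails for general finite graphs. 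The paper resolves exactly this point by noting that $M_nE$ is a finite graph with no sinks and invoking \cite[Theorem 3.1]{N}, which supplies a finite graph $G$ with no sinks, no sources and a loop at every vertex such that $L_k(M_nE)$ and $L_k(G)$ are Morita equivalent; one then pulls the full corner back to a full corner of some $\mathbb{M}_m(L_k(G))$ and runs the loop-case argument there. Without that theorem (or an equivalent normal-form result) your reduction is an assertion, not a proof.

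The rest of your argument is sound and contains one worthwhile variation. The paper stays inside $\mathbb{M}_n(L_k(E))\cong L_k(M_nE)$ and therefore needs separativity (\cite[Theorem 6.3]{ABC}) to replace $\Phi(e)$ by a vertex projection $e_X$ with $X$ hereditary. You instead enlarge the matrix size to $M=\sum_v m_v$, where the idempotent $\sum_v m_vv$ is, up to Murray--von Neumann equivalence, literally the vertex projection $P_X$ in $L_k(M_ME')$ for $X$ the $m_v$ lowest vertices of the head over each $v$; this $X$ is hereditary and contains $E'^0$, so Theorem~\ref{thm3} applies directly (with the degenerate forest $T^1=\varnothing$), and your observation that only the no-sink hypothesis is needed to conclude $(M_ME')(T)$ has no sinks is exactly what the paper records in Corollary~\ref{cor3}. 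So the tail of your proof is correct and arguably cleaner; the head needs \cite[Theorem 3.1]{N} or a substitute for it.
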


\begin{proof} First, we assume that every vertex of $E$ is a base point of at least one cycle of length one. By Proposition 9.3 of \cite{AT} and its proof there exists an isomorphism $\Phi:\mathbb{M}_n(L_k(E))\rightarrow L_k(M_nE)$ such that for each $v\in E^0$, $K_0(\Phi)([e_{11}\otimes v])=[v]$. Let $e$ be a full idempotent of $\mathbb{M}_n(L_k(E))$, thus by Lemma \ref{lemm1}, $e\sim \sum_{v\in E^0}m_vv$ with $m_v\geq 1$ for all $v\in E^0$. Since $L_k(M_nE)$ is separative by \cite[Theorem 6.3]{ABC}, $\Phi(e)$ is Murray-von Neumann equivalent to $e_X\in L_k(M_nE)$ such that $X$ is a finite, hereditary subset of $(M_nE)^0$ with $E^0\subseteq X$. By Corollary \ref{cor3}, $e_XL_k(M_nE)e_X\cong L_k(F)$ for some finite graph $F$ with no sinks and no sources. Thus $e\mathbb{M}_n(L_k(E))e\cong \Phi(e)L_k(M_nE)\Phi(e)\cong e_XL_k(M_nE)e_X\cong L_k(F)$ and the result follows. Now let $E$ be a finite graph with no sinks and no sources. Since $M_nE$ is a finite graph with no sinks, we can apply \cite[Theorem 3.1]{N} to get a finite graph $G$ with no sinks and no sources, and every vertex of $G$ is a base point of at least one cycle of length one, such that $L_k(M_nE)$ and $L_k(G)$ are Morita equivalent. Therefore there exist positive integer $m$ and full idempotent $f$ of $\mathbb{M}_m(L_k(G))$ such that $L_k(M_nE)\cong f\mathbb{M}_m(L_k(G))f$. Therefore $e\mathbb{M}_n(L_k(E))e\cong f'\mathbb{M}_m(L_k(G))f'$ for some full idempotent $f'$ of $\mathbb{M}_m(L_k(G))$. Thus by the first case there exists a finite graph $F$ that has no sinks and no sources such that $L_k(F)\cong f'\mathbb{M}_m(L_k(G))f'\cong e\mathbb{M}_n(L_k(E))e$ and the result follows.
\end{proof}

\begin{corollary}\label{cor5} Let $k$ be a field and $A$ be a unital $k$-algebra which is Morita equivalent to an algebraic Cuntz-Krieger $k$-algebra. Then $A$ is isomorphic to an algebraic Cuntz-Krieger algebra.
\end{corollary}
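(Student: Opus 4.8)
```latex
The plan is to reduce the statement to Proposition~\ref{prop1}, which already handles corners of matrix algebras over algebraic Cuntz-Krieger algebras. The key observation is that Morita equivalence of unital algebras can be expressed concretely: if $A$ is a unital $k$-algebra Morita equivalent to an algebraic Cuntz-Krieger algebra $B=\mathcal{CK}_k(E)$, then by the standard theory of Morita equivalence for rings with identity, $A$ is isomorphic to a corner $e\mathbb{M}_n(B)e$ for some positive integer $n$ and some idempotent $e\in\mathbb{M}_n(B)$. First I would invoke this structural fact to produce such an $n$ and $e$.

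The next step is to verify that the idempotent $e$ is \emph{full} in $\mathbb{M}_n(B)$. This is where the hypothesis that $A$ is unital does the work: since $A\cong e\mathbb{M}_n(B)e$ is Morita equivalent to $\mathbb{M}_n(B)$ (as Morita equivalence passes to matrix amplifications), and since Morita equivalence between $e\mathbb{M}_n(B)e$ and $\mathbb{M}_n(B)$ forces the ideal generated by $e$ to be all of $\mathbb{M}_n(B)$, the idempotent $e$ must be full. Equivalently, because $A$ and $B$ have the same lattice of (two-sided) ideals under the Morita correspondence and $B$ is unital, the corner $e\mathbb{M}_n(B)e$ can only recover all of $\mathbb{M}_n(B)$ up to Morita equivalence when $e$ generates $\mathbb{M}_n(B)$ as an ideal.

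With $e$ shown to be a full idempotent of $\mathbb{M}_n(L_k(E))$, I would apply Proposition~\ref{prop1} directly: it produces a finite graph $F$ with no sinks and no sources such that
\[
L_k(F)\cong e\mathbb{M}_n(L_k(E))e\cong A.
\]
By Definition~\ref{CKa}, $L_k(F)$ is an algebraic Cuntz-Krieger algebra, so $A$ is isomorphic to one, as desired.

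The main obstacle I anticipate is the fullness argument in the second step. Translating ``unital and Morita equivalent'' into ``isomorphic to a full corner'' requires care: one must confirm that the progenerator realizing the Morita equivalence can be taken to be a finitely generated projective $B$-module, so that $A=\operatorname{End}_B(P)$ embeds as a corner of some $\mathbb{M}_n(B)$, and then that the unitality of $A$ corresponds exactly to the idempotent $e$ being full rather than merely nonzero. Once fullness is secured, the rest is a clean application of the already-established Proposition~\ref{prop1}, so the crux of the argument is entirely in correctly packaging the Morita-theoretic input.
```
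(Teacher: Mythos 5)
Your proof is correct and follows the same route as the paper: both express the unital Morita-equivalent algebra $A$ as a full corner $e\mathbb{M}_n(L_k(E))e$ via standard Morita theory for unital rings and then apply Proposition~\ref{prop1}. The paper simply asserts the existence of the full idempotent without the elaboration you give, but the argument is essentially identical.
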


\begin{proof} Let $E$ be a finite graph with no sinks and no sources such that $A$ is Morita equivalent to the algebraic Cuntz-Krieger algebra $L_k(E)$. Therefore there exists a positive integer $n$ and full idempotent $e$ of $\mathbb{M}_n(L_k(E))$, that $A\cong e\mathbb{M}_n(L_k(E))e$. Therefore by Proposition \ref{prop1}, there exists a finite graph $F$ that has no sinks and no sources such that $L_k(F)\cong e\mathbb{M}_n(L_k(E))e\cong A$.
\end{proof}

\begin{corollary}\label{cor6} Let $k$ be a field and $A$ be a unital $k$-algebra which is stably isomorphic to an algebraic Cuntz-Krieger $k$-algebra. Then $A$ is isomorphic to an algebraic Cuntz-Krieger algebra.
\end{corollary}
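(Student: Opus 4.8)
The plan is to reduce Corollary \ref{cor6} to the already-proved Morita case, Corollary \ref{cor5}. The key observation is that stable isomorphism implies Morita equivalence, so that once $A$ is shown to be Morita equivalent to an algebraic Cuntz-Krieger algebra, I can invoke Corollary \ref{cor5} directly. Concretely, suppose $A$ is a unital $k$-algebra with $\mathbb{M}_\infty(A)\cong\mathbb{M}_\infty(L_k(E))$ for some finite graph $E$ with no sinks and no sources. The standard fact is that $\mathbb{M}_\infty(B)$ is Morita equivalent to $B$ for any unital (or locally unital) $k$-algebra $B$; hence $A$ is Morita equivalent to $\mathbb{M}_\infty(A)\cong\mathbb{M}_\infty(L_k(E))$, which in turn is Morita equivalent to $L_k(E)$. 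Since Morita equivalence is transitive, $A$ is Morita equivalent to the algebraic Cuntz-Krieger algebra $L_k(E)$.

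First I would make precise the Morita equivalence between a unital algebra $B$ and $\mathbb{M}_\infty(B)$: realize $B$ as the corner $e_{11}\mathbb{M}_\infty(B)e_{11}$ where $e_{11}$ is the idempotent with a single $1_B$ in the top-left entry, and note that this corner idempotent is full in $\mathbb{M}_\infty(B)$ because the two-sided ideal it generates contains every matrix unit $e_{ij}\otimes 1_B$ and hence all of $\mathbb{M}_\infty(B)$. A full corner yields a Morita equivalence, so $B$ and $\mathbb{M}_\infty(B)$ are Morita equivalent. Applying this with $B=A$ and separately with $B=L_k(E)$, and using the hypothesised isomorphism $\mathbb{M}_\infty(A)\cong\mathbb{M}_\infty(L_k(E))$ in the middle, chains together to give that $A$ is Morita equivalent to $L_k(E)$.

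Having established that $A$ is a unital $k$-algebra Morita equivalent to the algebraic Cuntz-Krieger algebra $L_k(E)$, Corollary \ref{cor5} applies verbatim and yields that $A$ is isomorphic to an algebraic Cuntz-Krieger algebra, which is exactly the conclusion. The main obstacle, such as it is, lies in justifying the Morita equivalence $B\sim\mathbb{M}_\infty(B)$ in the purely algebraic setting where $\mathbb{M}_\infty(B)$ is non-unital: one must check that fullness of the corner idempotent still delivers a Morita equivalence of module categories even without a unit. This is routine because $\mathbb{M}_\infty(B)$ has local units (the finite partial-sum idempotents $\sum_{i=1}^n e_{ii}\otimes 1_B$ form a set of local units), and the full-corner Morita theory goes through for rings with local units; the corner $e_{11}\mathbb{M}_\infty(B)e_{11}\cong B$ being full is the only point requiring verification, and it follows immediately from the matrix-unit computation above. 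Thus the proof is a short composition of these standard reductions culminating in an appeal to Corollary \ref{cor5}.
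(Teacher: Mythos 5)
Your proposal is correct and follows exactly the paper's route: reduce stable isomorphism to Morita equivalence and then invoke Corollary \ref{cor5}. The only difference is that the paper cites \cite[Proposition 9.10]{AT} for the implication ``stably isomorphic $\Rightarrow$ Morita equivalent,'' whereas you prove it directly via the full corner $e_{11}\mathbb{M}_\infty(B)e_{11}\cong B$ and Morita theory for rings with local units, which is a correct expansion of the same step.
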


\begin{proof} By \cite[Proposition 9.10]{AT}, $A$ is Morita equivalent to an algebraic Cuntz-Krieger $k$-algebra. Therefore the result follows by Corollary \ref{cor5}.
\end{proof}

\begin{corollary}\label{cor7} Let $k$ be a field and $A$ be a $k$-algebra. Then the following are equivalent:
\begin{itemize}
\item[(1)] $A$ is an algebraic Cuntz-Krieger algebra;
\item[(2)] $\mathbb{M}_n(A)$ is isomorphic to an algebraic Cuntz-Krieger algebra for each $n\in \mathbb{N}$;
\item[(3)] $\mathbb{M}_n(A)$ is isomorphic to an algebraic Cuntz-Krieger algebra for some $n\in \mathbb{N}$.
\end{itemize}
\end{corollary}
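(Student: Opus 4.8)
The plan is to close the cycle of implications $(1)\Rightarrow(2)\Rightarrow(3)\Rightarrow(1)$, leaning entirely on the three results already established in this section. The implication $(1)\Rightarrow(2)$ is precisely the statement of Proposition \ref{cor4}: if $A$ is an algebraic Cuntz-Krieger algebra then $\mathbb{M}_n(A)$ is isomorphic to one for every positive integer $n$. The implication $(2)\Rightarrow(3)$ is immediate, since the hypothesis for each $n$ trivially supplies the conclusion for some $n$ (for instance $n=1$). Thus all the substance lies in $(3)\Rightarrow(1)$.

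For $(3)\Rightarrow(1)$, suppose $\mathbb{M}_n(A)$ is isomorphic to an algebraic Cuntz-Krieger algebra $L_k(E)$ for some $n\in\mathbb{N}$, where $E$ is a finite graph with no sinks and no sources. First I would observe that $A$ is forced to be unital: an algebraic Cuntz-Krieger algebra is $L_k(E)$ for a finite graph and hence unital, so $\mathbb{M}_n(A)$ is unital, and the full matrix ring $\mathbb{M}_n(A)$ is unital precisely when $A$ is unital, the unit of $A$ being read off from the diagonal of the unit $\mathrm{diag}(1_A,\dots,1_A)$ of $\mathbb{M}_n(A)$. Once $A$ is known to be a unital $k$-algebra, the key point is the classical Morita equivalence between a unital ring and its full matrix rings, so that $A$ is Morita equivalent to $\mathbb{M}_n(A)\cong L_k(E)$. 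Hence $A$ is a unital $k$-algebra Morita equivalent to the algebraic Cuntz-Krieger algebra $L_k(E)$, and Corollary \ref{cor5} immediately yields that $A$ is isomorphic to an algebraic Cuntz-Krieger algebra.

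An equally valid route replaces Morita equivalence by stable isomorphism: since $\mathbb{M}_\infty(A)\cong\mathbb{M}_\infty(\mathbb{M}_n(A))\cong\mathbb{M}_\infty(L_k(E))$, the algebra $A$ is stably isomorphic to $L_k(E)$, and Corollary \ref{cor6} then applies to the unital algebra $A$. I would present the Morita version, since Corollary \ref{cor5} is the more primitive statement in this section (Corollary \ref{cor6} is itself derived from it through stable isomorphism). The only step I expect to require any care is the deduction that $A$ is unital from the unitality of $\mathbb{M}_n(A)$, together with the appeal to the standard Morita equivalence between $A$ and $\mathbb{M}_n(A)$; everything else is a direct citation of earlier results. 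Neither point is deep, but the unitality hypothesis of Corollaries \ref{cor5} and \ref{cor6} is genuinely needed and must be verified rather than assumed before those corollaries can be invoked.
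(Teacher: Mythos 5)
Your proof is correct and follows essentially the same route as the paper: the paper also cites Proposition \ref{cor4} for $(1)\Rightarrow(2)$, notes $(2)\Rightarrow(3)$ is obvious, and for $(3)\Rightarrow(1)$ deduces that $A$ is unital and then invokes the stable-isomorphism version (your ``equally valid route'' via Corollary \ref{cor6}) rather than the Morita version via Corollary \ref{cor5}. Since Corollary \ref{cor6} is itself derived from Corollary \ref{cor5}, the two variants are interchangeable and your argument is complete.
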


\begin{proof} $(1) \Rightarrow (2)$ follows from Proposition \ref{cor4}.

$(2) \Rightarrow (3)$ is obvious.

$(3) \Rightarrow (1)$ Assume that $\mathbb{M}_n(A)$ is isomorphic to an algebraic Cuntz-Krieger algebra for some $n\in \mathbb{N}$. Thus $\mathbb{M}_n(A)$ is unital and so $A$ is a unital $k$-algebra. Since $A$ is stably isomorphic to $\mathbb{M}_n(A)$, the result follows from Corollary \ref{cor6}.
\end{proof}

\begin{corollary}\label{cor7} Let $k$ be a field, $A$ be an algebraic Cuntz-Krieger $k$-algebra and $e$ be a non-zero idempotent of $A$. Then $eAe$ is isomorphic to an algebraic Cuntz-Krieger algebra.
\end{corollary}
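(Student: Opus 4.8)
The plan is to realise $eAe$ as a corner of the stabilisation $\mathbb{M}_\infty(A)$ cut down by an honest sum of distinct vertices, and then to invoke Corollary \ref{cor3}. Write $A=L_k(E)$ with $E$ a finite graph with no sinks and no sources. Recall from Section~4 that $\mathbb{M}_\infty(A)\cong L_k(SE)$, and that for the idempotent $e\in A$, viewed in the $(1,1)$-corner of $\mathbb{M}_\infty(A)$, one has $e\,\mathbb{M}_\infty(A)\,e\cong eAe$. Thus it suffices to identify the corner of $L_k(SE)$ determined by (the image of) $e$, and to check it falls under the scope of Corollary \ref{cor3}.

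First I would reduce $e$, up to Murray--von Neumann equivalence, to a sum of \emph{distinct} vertices of $SE$. By \cite[Theorem 3.5]{AMP} there are non-negative integers $\{n_v\}_{v\in E^0}$ with $e\sim\sum_{v\in E^0}n_vv$ in $\mathbb{M}_\infty(A)$. In $SE$ each $v$ carries an infinite head $\cdots\to v_2\to v_1\to v$; writing $v_0=v$, the Cuntz--Krieger relations give $e_i^v(e_i^v)^*=v_i$ and $(e_i^v)^*e_i^v=v_{i-1}$, so that $v_0,v_1,v_2,\dots$ are mutually orthogonal idempotents with $v_i\sim v$ for every $i$. Replacing the $n_v$ copies of $v$ by the distinct orthogonal vertices $v_0,\dots,v_{n_v-1}$ then yields $e\sim e_X$, where $X=\{v_i:v\in E^0,\ 0\le i<n_v\}$ is a finite set of distinct vertices of $SE$ and $e_X=\sum_{x\in X}x$. (Alternatively one may argue exactly as in Proposition \ref{prop1}, using separativity of $L_k(SE)$ from \cite[Theorem 6.3]{ABC}.) Since $e\neq0$, the set $X$ is non-empty.

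Next I would verify the finiteness hypothesis of Corollary \ref{cor3}. The head vertices are sources whose only descendants are $v$ and then vertices of the finite graph $E$, while the vertices of $E^0$ reach only vertices of $E^0$; hence the hereditary closure $H_{SE}(X)$ is contained in $E^0$ together with finitely many head vertices, and so is finite. Because $e\sim e_X$ in $\mathbb{M}_\infty(A)\cong L_k(SE)$, the associated corners are isomorphic, whence
$$eAe\cong e\,\mathbb{M}_\infty(A)\,e\cong e_X\,L_k(SE)\,e_X.$$
As $A=L_k(E)$ is an algebraic Cuntz--Krieger algebra, Corollary \ref{cor3}(1) then shows that $e_X L_k(SE)e_X$, and therefore $eAe$, is isomorphic to an algebraic Cuntz--Krieger algebra. (One could also finish via Corollary \ref{cor5}, noting that $eAe$ is unital and Morita equivalent to the ideal $AeA$; the route through Corollary \ref{cor3} is cleaner since it stays inside the paper's stabilisation machinery.)

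I expect the main obstacle to be the second step: upgrading the equivalence $e\sim\sum_v n_vv$, which records only multiplicities, to an equivalence with an honest vertex-sum idempotent $e_X$ built from distinct vertices with finite hereditary closure. This is precisely the place where the infinite heads of $SE$ (equivalently, separativity) are indispensable, and it is what makes Corollary \ref{cor3} applicable; once this is in place the identifications of corners and the finiteness check are routine.
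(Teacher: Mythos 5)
Your argument is correct, but it takes a genuinely different route from the paper's. You stay inside the Section~4 stabilisation machinery: identify $eAe$ with the corner of $L_k(SE)\cong\mathbb{M}_\infty(A)$ cut by $e$, invoke \cite[Theorem 3.5]{AMP} to get $e\sim\sum_{v}n_vv$, and then use the infinite heads of $SE$ --- whose vertices $v_0,v_1,v_2,\dots$ are mutually orthogonal and each Murray--von Neumann equivalent to $v$ --- to trade the multiplicities $n_v$ for a sum $e_X$ of \emph{distinct} vertices with finite hereditary closure, so that Corollary \ref{cor3}(1) applies directly. The paper instead passes through the ideal $I=AeA$: being generated by an idempotent, $I$ is graded and hence equals $I_H$ for a hereditary saturated set $H$; then $I_H$ is Morita equivalent to $L_k(E_H)$, hence by Corollaries \ref{cor2} and \ref{cor5} isomorphic to an algebraic Cuntz--Krieger algebra $B$ in which the image of $e$ is \emph{full}, and Proposition \ref{prop1} finishes. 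The whole point of that detour is to manufacture fullness, which Lemma \ref{lemm1} and Proposition \ref{prop1} require; your route sidesteps fullness entirely because Corollary \ref{cor3} only asks that $X$ be finite with $H_{SE}(X)$ finite. Two small points you should make explicit: (i) the relation $[e]=[e_X]$, a priori an equivalence in $\mathbb{M}_\infty(L_k(SE))$, must be cut down to an equivalence implemented by elements $x,y$ of $L_k(SE)$ itself before you may conclude $eL_k(SE)e\cong e_XL_k(SE)e_X$ --- this is the routine replacement $x\mapsto e_Xxe$, $y\mapsto eye_X$; and (ii) the identification $eAe=e\,\mathbb{M}_\infty(A)\,e$ uses that $A$ sits in $L_k(SE)$ as the corner by $\sum_{v\in E^0}v$, which holds because the head edges only flow into $E$. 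With those remarks added, your proof is complete and, if anything, more self-contained than the paper's, which leans on the Morita-invariance results of Section~5.
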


\begin{proof} Let $A=L_k(E)$ where $E$ is a finite graph with no sinks and no sources. Let $e$ be a non-zero idempotent of $A$ and $I$ be the ideal in $L_k(E)$ generated by $e$. As $eAe\subseteq I$ we have $eAe\subseteq eIe$ and so $eIe=eAe$. Since $I$ is generated by an idempotent $e$, by \cite[The proof of Proposition 5.2 and Theorem 5.3]{AMP} $I$ is a graded ideal of $L_k(E)$. Therefore there exists a hereditary saturated subset $H$ of $E^0$ such that $I=I_H$. Let $E_H=(H, s^{-1}_E(H), r_E, s_E)$. $E_H$ is a finite graph with no sinks and by \cite[Lemma 2.4]{APS}, $L_k(E_H)$ is Morita equivalent to $I_H$. Thus by Corollary \ref{cor2} and Corollary \ref{cor5}, $I_H$ is isomorphic to the algebraic Cuntz-Krieger algebra $B$. Hence $eAe=eI_He$ is isomorphic to the $fBf$ for some full idempotent $f$ of $B$. By Proposition \ref{prop1}, there exists a finite graph $F$ that has no sinks and no sources such that $L_k(F)\cong fBf$. Thus $eAe\cong L_k(F)$ and the result follows.
\end{proof}

\begin{corollary}\label{cor8} Let $k$ be a field, $A$ be an algebraic Cuntz-Krieger $k$-algebra and $e$ be a non-zero idempotent of $\mathbb{M}_\infty(A)$. Then $e\mathbb{M}_\infty(A)e$ is isomorphic to an algebraic Cuntz-Krieger algebra.
\end{corollary}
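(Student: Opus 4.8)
The plan is to reduce the infinite matrix algebra to a finite one by exploiting the finite support of $e$, and then apply the corner result already established for algebraic Cuntz-Krieger algebras.

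First I would use that every element of $\mathbb{M}_\infty(A)$ is finitely supported, so the idempotent $e$ lies in $\mathbb{M}_n(A)$ for some positive integer $n$, with all its nonzero entries confined to the top-left $n\times n$ block. I would then verify the identity $e\mathbb{M}_\infty(A)e=e\mathbb{M}_n(A)e$: for any $x\in\mathbb{M}_\infty(A)$, the products $ex$ and $exe$ have their entries supported in the first $n$ rows and columns, so $exe$ coincides with $e\,x_{[n]}\,e$, where $x_{[n]}\in\mathbb{M}_n(A)$ is the top-left $n\times n$ truncation of $x$; since $x_{[n]}$ ranges over all of $\mathbb{M}_n(A)$ as $x$ ranges over $\mathbb{M}_\infty(A)$, the two corners agree.

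Next I would invoke Proposition \ref{cor4} to obtain an isomorphism $\phi:\mathbb{M}_n(A)\to B$ onto an algebraic Cuntz-Krieger algebra $B$. Then $\phi(e)$ is a nonzero idempotent of $B$, and by the preceding Corollary \ref{cor7} the corner $\phi(e)B\phi(e)$ is isomorphic to an algebraic Cuntz-Krieger algebra. Since $\phi$ restricts to an isomorphism $e\mathbb{M}_n(A)e\cong\phi(e)B\phi(e)$, combining this with the identity from the previous step gives
$$e\mathbb{M}_\infty(A)e=e\mathbb{M}_n(A)e\cong\phi(e)B\phi(e),$$
which is isomorphic to an algebraic Cuntz-Krieger algebra, as required.

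There is no serious obstacle here; the whole argument is a reduction to earlier results. The only point demanding a little care is the identity $e\mathbb{M}_\infty(A)e=e\mathbb{M}_n(A)e$, which hinges precisely on the finite support of $e$. Once this is in hand, Proposition \ref{cor4} converts $\mathbb{M}_n(A)$ into a genuine algebraic Cuntz-Krieger algebra and Corollary \ref{cor7} on corners finishes the argument.
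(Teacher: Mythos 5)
Your argument is correct, but it takes a genuinely different route from the paper's. The paper follows the template of the $C^*$-algebraic result it is modelled on: it invokes nonstable $K$-theory (\cite[Theorem 3.5]{AMP}) to get $e\sim\sum_{v\in E^0}n_vv$, forms the idempotent $f=\sum_{v\in X}v$ with $X=\{v: n_v\neq 0\}$, compares the ideals generated by $e$ and by $e_{11}\otimes f$ via \cite[Theorem 5.3]{AMP} to conclude that $e\mathbb{M}_\infty(A)e$ is \emph{stably} isomorphic to the corner $fAf$ (itself an algebraic Cuntz-Krieger algebra by the preceding corollary), and then closes with Corollary \ref{cor6}. You instead exploit a feature specific to the algebraic setting: since $\mathbb{M}_\infty(A)$ consists of finitely supported matrices, $e$ already lies in a top-left block $\mathbb{M}_n(A)$, and your identity $e\mathbb{M}_\infty(A)e=e\mathbb{M}_n(A)e$ (correctly justified by the truncation computation) reduces everything to Proposition \ref{cor4} and the corner result for unital algebraic Cuntz-Krieger algebras. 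Your route is more elementary and shorter, avoiding the Murray--von Neumann and stable-isomorphism machinery entirely; what it does not buy is the extra information the paper's proof records along the way (an explicit representative $f$ supported on vertices with $e\sim$-data), and it would not transfer to the analytic setting of \cite{AR}, where elements of the stabilization need not have finite support. All the ingredients you cite (Proposition \ref{cor4} and Corollary \ref{cor7}) precede the statement in the paper, so there is no circularity.
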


\begin{proof} We use the same argument as in the proof of \cite[Corollary 4.10]{AR}. Let $A=L_k(E)$ where $E$ is a finite graph with no sinks and no sources. By Theorem 3.5 of \cite{AMP}, there exists a set $\{n_v|v\in E^0, n_v\geq0\}$ of non-negative integers such that $e\sim \sum_{v\in E^0}n_vv$. Let $X=\{v\in E^0| n_v\neq 0\}$ and $f=\sum_{v\in X}v$. Thus $f$ is a non-zero idempotent of $L_k(E)$ and by Corollary \ref{cor7}, there exists a finite graph $F$ that has no sinks and no sources such that $L_k(F)\cong fL_k(E)f$. By \cite[Theorem 5.3]{AMP}, the ideal of $\mathbb{M}_\infty(A)$ generated by $e$ and the ideal of $\mathbb{M}_\infty(A)$ generated by $e_{11}\otimes f$ coincide. Thus $fAf\otimes \mathbb{M}_\infty(k)\cong (e_{11}\otimes f)\mathbb{M}_\infty(A)(e_{11}\otimes f)\otimes \mathbb{M}_\infty(k)\cong e\mathbb{M}_\infty(A)e\otimes \mathbb{M}_\infty(k)$. Therefore $e\mathbb{M}_\infty(A)e$ is stably isomorphic to an algebraic Cuntz-Krieger algebra and the result follows from Corollary \ref{cor6}.
\end{proof}
%%%%%%%%%%%%%%%%%%%%%%%%%%%%%%%%%%%%%%
\section*{acknowledgements}
The author would like to thank to the referee for a careful reading of this paper and making many helpful suggestions that improved the presentation of the paper.

\end{document}